\documentclass[a4paper,11pt]{article}
\usepackage{latexsym}
\usepackage{amssymb}
\usepackage{enumerate}
\usepackage{enumitem}   
\usepackage{xcolor}
\usepackage{commath}
\usepackage{amsfonts}
\usepackage{amsmath,amsthm}
\usepackage{hyperref}
\usepackage{algorithm}
\usepackage{algorithmic}
\usepackage{framed}
\usepackage{boites}
\usepackage{graphicx} 
\usepackage{comment}

 \textheight=21.5cm \textwidth=15cm
 \topmargin=-0.8cm
 \oddsidemargin=0.3cm \evensidemargin=0.3cm

\newenvironment{@abssec}[1]{%
    \if@twocolumn

      \section*{#1}%
    \else

      \vspace{.05in}\footnotesize
      \parindent .2in
 {\upshape\bfseries #1. }\ignorespaces
    \fi}

    {\if@twocolumn\else\par\vspace{.1in}\fi}

\newenvironment{keywords}{\begin{@abssec}{\keywordsname}}{\end{@abssec}}

\newenvironment{AMS}{\begin{@abssec}{\AMSname}}{\end{@abssec}}

\newcommand\keywordsname{Key words}
\newcommand\AMSname{AMS subject classifications}
\newcommand\AMname{AMS subject classification}
\newcommand\restr[2]{{
\left.\kern-\nulldelimiterspace 
#1 
\vphantom{|} 
\right|_{#2} 
}}
\newtheorem{theorem}{Theorem}[section]
\newtheorem{lemma}[theorem]{Lemma}
\newtheorem{corollary}[theorem]{Corollary}

\newtheorem{definition}[theorem]{Definition}

\newcommand{\measurerestr}{%
  \,\raisebox{-.127ex}{\reflectbox{\rotatebox[origin=br]{-90}{$\lnot$}}}\,%
}

\newcommand{\RR}{\mathbb{R}}

\def\XXint#1#2#3{{\setbox0=\hbox{$#1{#2#3}{\int}$}
\vcenter{\hbox{$#2#3$}}\kern-.5\wd0}}

\newcommand{\link}{\mathop{\circ\kern-.35em -}}

\newcommand{\ol}{\overline}
\newcommand{\pa}{\partial}

\newcommand{\dv}{\mathop{\mathrm{div}}}

\newcommand{\gr}{\nabla}

\newcommand{\al}{\alpha}
\newcommand{\be}{\beta}
\newcommand{\ga}{\gamma}  
\newcommand{\Ga}{\Gamma}

\newcommand{\Si}{\Sigma}
\newcommand{\te}{\theta}

\newcommand{\om}{\omega}
\newcommand{\Om}{\Omega}
\newcommand{\rn}{{\mathbb{R}}^N}

\newcommand{\cF}{\mathcal{F}}
\newcommand{\cG}{{\mathcal G}}
\newcommand{\cH}{{\mathcal H}}

\newcommand{\cL}{{\mathcal L}}

\newcommand{\cX}{\mathcal{X}}

\title{The double queen Dido's problem \thanks{This research was partially supported by the Grants-in-Aid for Scientific Research (B)  No.18H01126 and  JSPS Fellows No.18J11430 of Japan Society for the Promotion of Science. This work was also partially supported by the project ANR-18-CE40-0013 SHAPO financed by the French Agence Nationale de la Recherche (ANR).}}

\author{Lorenzo Cavallina\thanks{Research Center for Pure and Applied Mathematics,
Graduate School of Information Sciences, Tohoku
University, Sendai, 980-8579, Japan ({\tt cava@ims.is.tohoku.ac.jp}, {\tt sigersak@tohoku.ac.jp}).}\ ,
Antoine Henrot\thanks{Institut Elie Cartan de Lorraine, CNRS UMR 7502 and Universit\'e de Lorraine, BP 70239 54506 Vandoeuvre-l\`es-Nancy, France
({\tt antoine.henrot@univ-lorraine.fr}).}\ , and Shigeru Sakaguchi\footnotemark[2]}

\date{}

\begin{document}

\maketitle

\begin{abstract}
 This paper deals with a variation of the classical isoperimetric problem in dimension $N\ge 2$ for a two-phase piecewise constant density whose discontinuity interface is a given hyperplane. We introduce a weighted perimeter functional with three different weights, one for the hyperplane and one for each of the two open half-spaces in which $\mathbb{R}^N$ gets partitioned.  
We then consider the problem of characterizing the sets $\Omega$ that minimize this weighted perimeter functional 
under the additional constraint that the volumes of the portions of $\Omega$ in the two half-spaces are given.  
It is shown that the problem admits two kinds of minimizers, which will be called type~I and type~II,  respectively. These minimizers are made of the union of two spherical domes whose angle of incidence satisfies some kind of \textquotedblleft Snell's law\textquotedblright.  Finally, 
we  provide a complete classification of the minimizers depending on the various parameters of the problem.
\end{abstract}

\begin{keywords}
Isoperimetric problem, Dido's problem, constrained minimization problem, weighted manifold, two-phase.
\end{keywords}

\begin{AMS}
49Q20.
\end{AMS}

\pagestyle{plain}
\thispagestyle{plain}

\section{Introduction and main result}

The classical isoperimetric inequality has a long history even if one had to wait until the fifties for a rigorous proof for general sets by E. De Giorgi (see \cite{Degiorgi, Burago Zalgaller} for
some history). There is also some important literature on isoperimetric problems with densities, see for 
example \cite[Chapter 18]{Morgan 2008} for an introduction. Let a positive function 
$f:\mathbb{R}^N\to \mathbb{R}_+$ be given.
For any sufficiently smooth set $E$, we define the weighted volume and perimeter
of E to be 
\begin{equation}\label{weighted area and perimeter}
|E|_f=\int_E f\,dx, \quad P_f(E)=\int_{\partial E} f \,d\cH^{N-1}.
\end{equation}
In probability theory, it is quite common to use the Gaussian density $f(x)=\exp(-|x|^2)$
for which the isoperimetric sets are half-spaces, see \cite{borell} and \cite[Chapter 18]{Morgan 2008}.
Another classical choice is radial functions like $f(x)=|x|^q$ (see e.g. \cite{ABCMP, BBMP, CJQW}), for which the isoperimetric sets are usually balls. A much less studied density is a 
 piecewise constant density. In the paper \cite{CMV2010}, A. Ca{\~n}ete, M. Miranda and N. Vittone studied some particular cases related to the characteristic functions of half-planes, strips and balls. Our aim here
 is to consider a variant of this study by considering two-half spaces in $\rn$ with different constant
 densities together with a cost $\ga\ge0$ (possibly $0$) on the hyperplane which is the interface.
Concerning Dido's problem in the half space (with a constant density), 
the solution is given by a half ball, see e.g.
\cite{CGR2007} for the proof of a more general result, namely that the half ball has the smallest possible (relative) perimeter than any other set of the same volume outside a convex domain. As an application, for our problem, if we do not put any cost on the interface (that is, $\ga=0$), the problem decouples and the solution of the isoperimetric problem will be the union of two half balls. 

\medskip
Let us now fix the notations and set the problem in more detail.
Let $\rn_\pm$ denote the following left and right open half-spaces of $\rn$:
\begin{equation*}
\rn_-=\left\{ (x_1,x_2,\dots, x_N)\in \rn \;:\; x_1<0 \right\},\;
\rn_+=\left\{ (x_1,x_2,\dots, x_N)\in \rn \;:\; x_1>0 \right\},
\end{equation*}
and let  $\Sigma$ be the vertical hyperplane 
$$
\Sigma=\left\{ (x_1,x_2,\dots, x_N)\in \rn \;:\; x_1=0 \right\}.
$$
For a given set of finite perimeter $\Omega\subset\rn$, put
\begin{equation}\label{some notations}
\Om_\pm = \Om\cap \rn_\pm,\quad \Ga_\pm= \pa\Om\cap \rn_\pm, \quad \Ga_0= \pa\Om\cap\Sigma.    
\end{equation}

For given constants $V_\pm,\rho_\pm>0$ and $\gamma \ge 0$, we consider the following constrained minimization problem:
\begin{equation}\label{min pb}
    \min\left\{\cF(\Om)\;:\; \Om\subset \rn \text{ is of finite perimeter and } \rho_\pm |\Om_\pm|=V_\pm\right\},
\end{equation}
where
\begin{equation}
\cF(\Om)= \rho_- P(\Omega, \rn_-)+ \rho_+ P(\Omega, \rn_+) + \gamma \cH^{N-1}(\Ga_0).
\end{equation}
In the above, we used the notations $|\cdot|$, $P$ and $\cH^{N-1}$ for the Lebesgue measure, the relative perimeter
in the sense of De Giorgi (see \cite{HP}) and the $(N-1)$-dimensional Hausdorff measure,  respectively. We remark that, if the boundary $\pa\Om$ coincides with the reduced boundary $\pa^\ast \Om$, then $$\cF(\Om)=P_f(\Om),$$ where $P_f$ is the weighted perimeter introduced in \eqref{weighted area and perimeter} and $f$ is the piecewise constant function defined as  $f(x)=\rho_\pm$ for $x_1\gtrless 0$ and $f(x)=\ga$ if $x_1=0$. 

The aim of this paper is to give a complete characterization of the minimizers of \eqref{min pb} for all values of the parameters $V_\pm$, $\rho_\pm$ and $\gamma$.  
This paper is organized as follows. 
In section \ref{ch geometr prop} we show that, if $\ga>0$, any minimizer $\Om$ of \eqref{min pb}, if it exists, must be connected and both $\Ga_\pm$ are spherical caps. This fact allows for only two types of minimizers of \eqref{min pb}: one where the boundaries of the two spherical caps coincide (type~I) and another one where $\cH^{N-1}(\Ga_0)>0$ (type~II).
In section \ref{ch existence} we show the existence of minimizers for problem \eqref{min pb} by means of a standard compactness argument. 
 In section \ref{ch snell law} we derive some geometrical transmission conditions that describe the angle of incidence between $\Ga_\pm$ and $\Sigma$. By means of these conditions, we are able to reduce the number of potential minimizers to just two: one for each type (up to translations). Finally, in section \ref{ch proof main thm} we find a threshold $\ga^*=\ga^*(V_\pm,\rho_\pm)$ such that the minimizer of \eqref{min pb} is of type~II for $0<\ga<\ga^*$ and of type~I for $\ga\ge\ga^*$. 


\section{Geometrical properties of minimizers} \label{ch geometr prop}
Here we will study the geometrical properties of a minimizer of \eqref{min pb} provided that at least one exists. The question of existence will be then addressed in the next section. For this purpose, let us first utilize the Schwarz symmetrization (see \cite[p. 238]{Maggi} for its definition and properties). Let $\ell$ be a line orthogonal to $\Sigma$. For a set of finite perimeter $\Omega$ in $\mathbb R^N$ with $|\Omega| < +\infty$, let $\Omega^*$ denote the Schwarz symmetrization of $\Omega$ around $\ell$. Then we have


\begin{lemma}
\label{Schwarz symmetrization A}
If $\Omega$ is a set of finite perimeter in $\mathbb R^N$ with $|\Omega| < +\infty$, then $\Omega^*$ is also a set of finite perimeter in $\mathbb R^N$ and the following hold
$$
|\Om^*_\pm| = |\Om_\pm|,\ P(\Omega^*, \rn_\pm) \le P(\Omega, \rn_\pm),\ \cH^{N-1}(\Ga^*_0) \le \cH^{N-1}(\Ga_0) \ \mbox{ and hence }\ \cF(\Om^*) \le \cF(\Om),
$$
respectively, where $\Om^*_\pm$ and $\Ga^*_0$ follow notations \eqref{some notations}.
\end{lemma}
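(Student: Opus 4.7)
Since $\ell$ is orthogonal to $\Sigma$, it is parallel to the $x_1$-axis, so Schwarz symmetrization around $\ell$ acts slice-by-slice: for each $t\in\RR$, the $(N-1)$-dimensional slice $\Om\cap\{x_1=t\}$ is replaced by the concentric $(N-1)$-ball in $\{x_1=t\}$ of equal $\cH^{N-1}$-measure, centered at $\ell\cap\{x_1=t\}$. In particular the operation commutes with the decomposition $\rn=\rn_-\cup\Sigma\cup\rn_+$, so each ingredient of $\cF$ can be handled separately. The two volume identities $|\Om^*_\pm|=|\Om_\pm|$ then follow immediately from Fubini's theorem applied to each half-space.

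For the perimeter estimates $P(\Om^*,\rn_\pm)\le P(\Om,\rn_\pm)$, I would invoke the local version of the Schwarz symmetrization inequality: symmetrization around $\ell$ does not increase perimeter localized to any slab of the form $\{a<x_1<b\}$ (see e.g.\ Maggi's book). This can be proved by approximating $\Om$ by smooth sets, applying the coarea formula with respect to $x_1$ together with the classical $(N-1)$-dimensional isoperimetric inequality slice by slice, and passing to the limit via lower semicontinuity of the perimeter. Specializing to $(a,b)=(-\infty,0)$ and $(a,b)=(0,\infty)$ gives what is needed.

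The most delicate piece is the interface bound $\cH^{N-1}(\Ga_0^*)\le\cH^{N-1}(\Ga_0)$. I would interpret $\Ga_0$ via the trace theory for sets of finite perimeter. Let $\Om_0^\pm\subset\Sigma$ denote the inner traces of $\chi_\Om$ on $\Sigma$ from the two sides; by the $BV$ structure theorem one has, up to $\cH^{N-1}$-null subsets of $\Sigma$, the identity $\Ga_0 = \Om_0^+\triangle\Om_0^-$. Since Schwarz symmetrization commutes with taking the one-sided trace on $\Sigma$, the corresponding traces for $\Om^*$ are the $(N-1)$-dimensional Schwarz symmetrizations of $\Om_0^\pm$ in $\Sigma$, i.e.\ concentric $(N-1)$-balls, hence nested. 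Therefore
$$
\cH^{N-1}(\Ga_0^*) = \bigl|\cH^{N-1}(\Om_0^+) - \cH^{N-1}(\Om_0^-)\bigr| \le \cH^{N-1}(\Om_0^+\triangle\Om_0^-) = \cH^{N-1}(\Ga_0),
$$
the middle step being the elementary bound $|A\triangle B|\ge\bigl||A|-|B|\bigr|$. Combining the three estimates with the fact that $\rho_\pm,\gamma\ge 0$ yields $\cF(\Om^*)\le\cF(\Om)$.

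The main obstacle I anticipate is a clean justification of the trace identity $\Ga_0=\Om_0^+\triangle\Om_0^-$ modulo $\cH^{N-1}$-null sets and the verification that Schwarz symmetrization indeed commutes with the one-sided trace on $\Sigma$; once these are in place, the rest of the argument is largely bookkeeping combined with well-known slab-localized symmetrization results.
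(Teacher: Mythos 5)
Your plan has the right shape (Fubini for the volumes, a localized perimeter inequality in each half-space, a separate treatment of the interface term), and your trace-theoretic handling of $\Ga_0$ is genuinely different from the paper's; but the two steps you delegate to ``well-known'' results are precisely where the work lies, and as written they are gaps. For the half-space estimate, a Schwarz inequality localized in the axial variable, $P(\Om^*,\{a<x_1<b\})\le P(\Om,\{a<x_1<b\})$, is not simply quotable from the result the paper uses (Maggi's theorem is invoked there only in its global form $P(E^*)\le P(E)$), and your sketched derivation by smooth approximation stumbles exactly where the paper warns that ``the regularity of $\partial\Om$ on $\Sigma$ is not transparent'': relative perimeters $P(\cdot,A)$ are only lower semicontinuous, so your scheme needs $P(\Om_k,A)\to P(\Om,A)$ for the approximating sets with $A=\rn_\pm$, which in turn requires $P(\Om,\partial A)=P(\Om,\Sigma)=0$ --- and possible concentration of $|D\chi_\Om|$ on $\Sigma$ is the whole difficulty. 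The paper's proof is designed around this: it cuts $\Om$ at good levels $x_1=\pm\varepsilon_n$ where the Gauss--Green identity for $\Om\cap H_{\pm\varepsilon_n}$ holds and the slice has finite $\cH^{N-1}$-measure, applies the \emph{global} inequality to the cut sets, cancels the slice terms because symmetrization preserves the slice measure, and lets $n\to\infty$; the same mechanism with the shrinking slabs $F_n$ gives the interface bound. To keep your route you must either prove the slab-localized inequality (which essentially reproduces this argument) or cite a reference that genuinely contains it.

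For the interface term, the inequality $\cH^{N-1}(\Om_0^+\triangle\Om_0^-)\le\cH^{N-1}(\Ga_0)$ is fine (points where the one-sided traces differ lie in $\partial^*\Om\cap\Sigma\subset\partial\Om\cap\Sigma$), but the assertion that ``Schwarz symmetrization commutes with the one-sided trace on $\Sigma$'' is not free and is the second gap. By symmetry the traces of $\Om^*$ are concentric balls, but you need their measures to equal $\cH^{N-1}(\Om_0^\pm)$; this amounts to showing that $\lim_{t\to 0^\pm}\cH^{N-1}(\{x'\in\Sigma:(t,x')\in\Om\})$ exists and equals the total trace mass, i.e. that no slice mass is lost at spatial infinity as $t\to 0$. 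This is true for sets of finite perimeter and finite volume, but it requires an argument of the same nature as the paper's slab computation (note that the related construction of $\Om^\sharp$ in Lemma \ref{Schwarz symmetrization B} only extracts subsequential limits of these slice radii). Once these two points are supplied, your trace argument does yield a correct and arguably more transparent proof of $\cH^{N-1}(\Ga_0^*)\le\cH^{N-1}(\Ga_0)$, with the bookkeeping $\bigl|\,|A|-|B|\,\bigr|\le|A\triangle B|$ replacing the paper's limit over the slabs $F_n$.
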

\begin{proof} 
By means of the Schwarz symmetrization, Fubini's theorem yields that $|\Om^*_\pm| = |\Om_\pm|$, respectively.
Since the regularity of $\partial\Omega$ on $\Sigma$ is not transparent, we employ the following approximation argument.
In view of \cite[Exercise 15.13, p. 173]{Maggi}, we may find a decreasing sequence of positive numbers $\{ \varepsilon_n \}_n$ with $\lim\limits_{n \to \infty}\varepsilon_n=0$ such that for every $n \in \mathbb N$
each left half-space $H_{-\varepsilon_n} = \{ x \in \mathbb R^N : x_1< -\varepsilon_n \}$ satisfies the following:
\begin{eqnarray}
&&\mu_{\Omega\cap H_{-\varepsilon_n}} = \mu_\Omega\measurerestr H_{-\varepsilon_n} + e_1 \cH^{N-1}\measurerestr(\Omega\cap\partial H_{-\varepsilon_n}), \label{the key identity} \\
&&\cH^{N-1}(\Omega\cap\partial H_{-\varepsilon_n}) \le P(\Omega, H_{-\varepsilon_n}) \ \mbox{ and } P(\Omega\cap H_{-\varepsilon_n}) \le P(\Omega),
\nonumber
\end{eqnarray}
where $\mu_{\Omega\cap H_{-\varepsilon_n}}$ and $\mu_\Omega$ are the Gauss-Green measures of the two sets of finite perimeter $\Omega\cap H_{-\varepsilon_n}$ and $\Omega$, respectively, (see \cite[Chapter 12]{Maggi} for the definition and some basic properties) and $e_1=(1,0, \dots,0) \in \mathbb R^N$. Hence it follows from \eqref{the key identity} that for every $n \in \mathbb N$
\begin{equation}
\label{identity 1 for n}
P(\Omega\cap H_{-\varepsilon_n}) = P(\Omega, H_{-\varepsilon_n}) + \cH^{N-1}(\Omega\cap\partial H_{-\varepsilon_n}).
\end{equation}
By the same argument as above, we may also have that for every $n \in \mathbb N$
\begin{eqnarray}
&&P(\Omega^*\cap H_{-\varepsilon_n}) = P(\Omega^*, H_{-\varepsilon_n}) + \cH^{N-1}(\Omega^*\cap\partial H_{-\varepsilon_n}),
\label{identity 2 for n star}
\\
&&\cH^{N-1}(\Omega^*\cap\partial H_{-\varepsilon_n}) \le P(\Omega^*, H_{-\varepsilon_n}) \ \mbox{ and } P(\Omega^*\cap H_{-\varepsilon_n}) \le P(\Omega^*), \label{estimates from above for every n}
\end{eqnarray}
if we replace $\Omega$ with its Schwarz symmetrization $\Omega^*$.
By means of the Schwarz symmetrization, we notice that for every $n \in \mathbb N$
\begin{equation}
\label{means of the Schwarz symmetrization}
\left(\Omega\cap H_{-\varepsilon_n}\right)^* = \Omega^*\cap H_{-\varepsilon_n} \ \mbox{ and }\ \cH^{N-1}(\Omega\cap\partial H_{-\varepsilon_n}) =
\cH^{N-1}(\Omega^*\cap\partial H_{-\varepsilon_n}).
\end{equation}
Then, the inequality \cite[Theorem 19.11, p. 238]{Maggi} shows that
$$
P(\Omega^*\cap H_{-\varepsilon_n}) = P\left((\Omega\cap H_{-\varepsilon_n})^*\right) \le P(\Omega\cap H_{-\varepsilon_n}).
$$
Therefore, by combining \eqref{identity 1 for n} and \eqref{identity 2 for n star} with the second equality of \eqref{means of the Schwarz symmetrization}, we conclude that for every $n \in \mathbb N$
$$
P(\Omega^*, H_{-\varepsilon_n}) \le P(\Omega, H_{-\varepsilon_n}).
$$
Now, since the monotonically increasing sequence of sets $\{H_{-\varepsilon_n}\}_n$ converges to $\rn_-$, letting $n\to\infty$ gives
\begin{equation}
\label{left-half part}
P(\Omega^*, \rn_-) \le P(\Omega, \rn_-).
\end{equation}
Similarly, using right half-space $H_{\varepsilon_n} = \{ x \in \mathbb R^N : x_1 > \varepsilon_n \}$ instead of $H_{-\varepsilon_n}$ gives
\begin{equation}
\label{right-half part}
P(\Omega^*, \rn_+) \le P(\Omega, \rn_+).
\end{equation}
Also, we introduce the set $F_n = \{ x \in \mathbb R^N : -\varepsilon_n < x_1 < \varepsilon_n \}$ instead of the two half-spaces, and similarly we obtain that for every $n \in \mathbb N$
\begin{eqnarray*}
P(\Omega\cap F_n) &=& P(\Omega, F_n) + \cH^{N-1}(\Omega\cap\partial H_{-\varepsilon_n}) + \cH^{N-1}(\Omega\cap\partial H_{\varepsilon_n}),
\\
P(\Omega^*\cap F_n) &=& P(\Omega^*, F_n) + \cH^{N-1}(\Omega^*\cap\partial H_{-\varepsilon_n}) + \cH^{N-1}(\Omega^*\cap\partial H_{\varepsilon_n}).
\end{eqnarray*}
Here the inequality \cite[Theorem 19.11, p. 238]{Maggi} shows again that
$$
P(\Omega^*\cap F_n) \le P(\Omega\cap F_n),
$$
and hence similarly
$$
P(\Omega^*, F_n) \le P(\Omega, F_n).
$$
Now, since the monotonically decreasing sequence of sets $\{F_n\}_n$ converges to $\Si$, letting $n \to \infty$ gives
\begin{equation}
\label{middle part}
\cH^{N-1}(\Ga^*_0) \le \cH^{N-1}(\Ga_0).
\end{equation}
Finally, collecting \eqref{left-half part}, \eqref{right-half part} and \eqref{middle part} yields that
\begin{equation*}
\cF(\Om^*) \le \cF(\Om).
\end{equation*}

\end{proof}


\begin{theorem}\label{thm spherical caps}
If $\Om$ is a minimizer of \eqref{min pb}, then each of $\Ga_\pm$ is either a spherical cap or a sphere.
\end{theorem}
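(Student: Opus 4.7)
The plan is to use Lemma 2.1 to reduce to an axisymmetric situation, and then to classify the admissible shapes through the constant-mean-curvature equation.

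\textbf{Step 1 (Axisymmetry of $\Omega$).} Since $\Omega$ is a minimizer, Lemma 2.1 applied to any line $\ell\perp\Sigma$ forces the equality $\cF(\Omega^*)=\cF(\Omega)$, and hence equality in each of the three inequalities
\[
P(\Omega^*, \rn_-) = P(\Omega, \rn_-), \qquad P(\Omega^*, \rn_+) = P(\Omega, \rn_+), \qquad \cH^{N-1}(\Ga_0^*) = \cH^{N-1}(\Ga_0).
\]
Slicing $\Omega$ along the $x_1$-direction and invoking the rigidity for equality in Schwarz symmetrization (in the spirit of Brothers--Ziemer), together with a connectedness argument to propagate the horizontal translations across slices, one deduces that $\Omega$ itself is, up to a translation parallel to $\Sigma$, Schwarz-symmetric about some line $\ell\perp\Sigma$.

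\textbf{Step 2 (Stationarity in the interiors).} For any point $p\in\Ga_+\cap\rn_+$, consider a smooth one-parameter family of compactly supported diffeomorphisms of $\rn$ whose support lies inside a small ball around $p$ contained in $\rn_+$. Such variations leave $\Om_-$ and $\Ga_0$ invariant, so the only term of $\cF$ that changes is $\rho_+ P(\Omega,\rn_+)$, and the only active constraint is $|\Om_+|=V_+/\rho_+$. The standard first-variation formula then shows that $\Ga_+\cap\rn_+$ has constant mean curvature equal to the corresponding Lagrange multiplier divided by $\rho_+$. The same argument handles $\Ga_-\cap\rn_-$.

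\textbf{Step 3 (Classification).} Combining Steps 1 and 2, $\Ga_+\cap\rn_+$ is an axisymmetric CMC hypersurface of finite area whose axial profile $r(t)$ must vanish with vertical tangent at the endpoint farthest from $\Sigma$, since $\Om_+$ is bounded and rotationally symmetric about $\ell$. Writing the stationarity condition for the reduced one-dimensional functional $\int r^{N-2}\sqrt{1+r'^2}\,dt$ subject to $\int r^{N-1}\,dt$ fixed, the absence of explicit $t$-dependence gives the conserved quantity
\[
\frac{r(t)^{N-2}}{\sqrt{1+r'(t)^2}} \;-\; \lambda\, r(t)^{N-1} \;=\; C,
\]
and the vertical-tangent endpoint condition forces $C=0$. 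Integrating this first-order ODE yields $r(t)=\sqrt{R^2-(t-t_0)^2}$ for some $R>0$ and $t_0$, so $\Ga_+$ is a spherical arc rotated about $\ell$, i.e.\ a spherical cap if $r>0$ on $\Si$ and a full sphere otherwise. The argument for $\Ga_-$ is identical.

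\textbf{Main obstacle.} The most delicate step is the rigidity claim in Step 1: equality in the inequalities of Lemma 2.1 only yields equality of perimeters and of the $(N-1)$-Hausdorff measure of $\Ga_0$, and must be upgraded to pointwise coincidence of $\Omega$ and $\Omega^*$ up to a single horizontal translation via a careful slicewise analysis. As a safer alternative, one could bypass Step 1 and apply Alexandrov's moving-plane method to each bounded connected component of $\Om_\pm$, exploiting the CMC property of Step 2 to obtain the spherical shape of $\Ga_\pm$ directly from the mean-curvature equation without requiring full Schwarz symmetry of $\Om$.
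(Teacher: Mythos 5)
There is a genuine gap, and it sits exactly where you flag it: Step 1. Equality in Lemma \ref{Schwarz symmetrization A} (via the equality case of \cite[Theorem 19.11, p.~238]{Maggi}) only tells you that for almost every $t$ the slice $\{x\in\Om : x_1=t\}$ is $\cH^{N-1}$-equivalent to an $(N-1)$-dimensional ball; it does not align the centers of these balls, and ``in the spirit of Brothers--Ziemer, together with a connectedness argument'' is precisely the rigidity statement that has to be proved -- it can genuinely fail when the radius profile has flat (cylindrical) parts or vanishes, which is not excluded at this stage. The missing ingredient, which the paper uses to close exactly this gap, is the regularity theorem of Gonzales--Massari--Tamanini \cite{GMT1983}: each $\Om_\pm$ is a volume-constrained perimeter minimizer in its open half-space, so $\Ga_\pm$ is analytic up to a set of dimension at most $N-8$; combined with ``slices are balls'' this yields axial symmetry of each $\Om_\pm$ about some line orthogonal to $\Sigma$ and rules out flat parts. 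Note also that full Schwarz symmetry of $\Om$ (coincidence with $\Om^*$ about a single common axis) is both stronger than what is available and more than what is needed. The same regularity input is silently required in your Steps 2--3: without it you cannot pass from the distributional first variation to a classical profile ODE, nor justify that the profile meets the axis with a vertical tangent (interior CMC alone does not exclude a conical tip).

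Your proposed fallback does not repair this: Alexandrov's theorem classifies \emph{closed} embedded CMC hypersurfaces, whereas $\Ga_\pm$ are hypersurfaces with boundary on $\Sigma$, where no regularity is known a priori -- this is exactly the difficulty the paper's construction is designed to avoid, and a free-boundary moving-plane argument would require control of $\partial\Om$ near $\Sigma$ that you do not have. Once axial symmetry and analyticity are secured, however, your Steps 2--3 are a legitimate alternative finish: the paper instead replaces $\overline{\Om_+}\cap\{x_1>t_0\}$ by a ball cap of the same volume and invokes the equality case of the classical isoperimetric inequality, then propagates by analyticity and a connectedness argument -- no Lagrange multipliers and no ODE analysis. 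Your first-integral computation with $C=0$ forced at the axis is correct, but you should also discard the constant solution $r\equiv R$ of the first-order ODE (a cylindrical piece), which is done by appealing to the second-order CMC equation or to analyticity.
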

\begin{proof}
Let $\Om$ be a minimizer of \eqref{min pb}.
%
In particular, both $\Om_\pm$ are isoperimetric sets on their own, that is, $\Omega_\pm$ minimize perimeter in $\mathbb R^N_\pm$, respectively, with a volume constraint in the sense of Gonzales, Massari and Tamanini \cite[p. 27]{GMT1983}. Therefore their regularity result \cite[Theorem 2, p. 29]{GMT1983} implies that $\Ga_\pm$ must be analytic surfaces up to a singular set of dimension at most $N-8$ (that is to say that the singular set is empty when $N\le 7$).

It follows from Lemma \ref{Schwarz symmetrization A} that the Schwarz symmetrization $\Om^*$ of $\Omega$ is also a minimizer of \eqref{min pb} with $\cF(\Om^*) = \cF(\Om)$, since $\Om$ does.
Hence the equalities also hold in \eqref{left-half part} and \eqref{right-half part}.
These two equalities together with \cite[Theorem 19.11, p. 238]{Maggi} yield that for almost every $t \in \mathbb R$, the set $\Omega_t = \{ x \in \Omega : x_1 = t \}$ is $\cH^{N-1}$-equivalent to an $(N-1)$-dimensional ball, whose radius will be denoted by $R(t)\ge0$.
By combining this information with the regularity of $\Ga_\pm$ mentioned at the beginning of the proof, we infer that each $\Om_\pm$ needs to enjoy axial symmetry with respect to some straight line orthogonal to the hyperplane $\Sigma$ 
and each $\Ga_\pm$ does not contain any flat parts. In particular, both $\Ga_\pm$ are analytic everywhere except at most at those points where $\Ga_\pm$ intersect their axis of symmetry. In other words, we know that $\Ga_\pm$ are analytic at every point $x\in\Ga_\pm$ whose first component $x_1$ belongs to
\begin{equation*}
I=\{t\in\RR \;:\; R(t)>0\}.
\end{equation*}



We will now show that each $\Ga_\pm$ is a spherical cap or a sphere, as claimed.
To fix ideas, let us consider the following rearrangement of $\Om_+$. Take some positive value $t_0\in I$. By the above, we know that $\Om_+$ is an axially symmetric set and the intersection $D=\ol{\Om_+}\cap \{x_1=t_0\}$ is an $(N-1)$-dimensional closed ball of positive radius $R(t_0)>0$. 
Now, let $B$ denote the closed ball in $\rn$ determined by \begin{equation}\label{prop of B}
B\cap \{x_1=t_0\}=D\ \mbox{ and }\ |B\cap \{x_1>t_0\}|=|\Om_+\cap\{x_1>t_0\}|.
\end{equation}
Define
\begin{equation}\label{a rearrangement}
\widetilde B = \left( B\cap \{x_1\le t_0\} \right) \cup \left(\overline{\Om_+}\cap \{x_1>t_0\}\right).
\end{equation}

\begin{figure}[ht]
\centering
\includegraphics[width=0.6\linewidth]{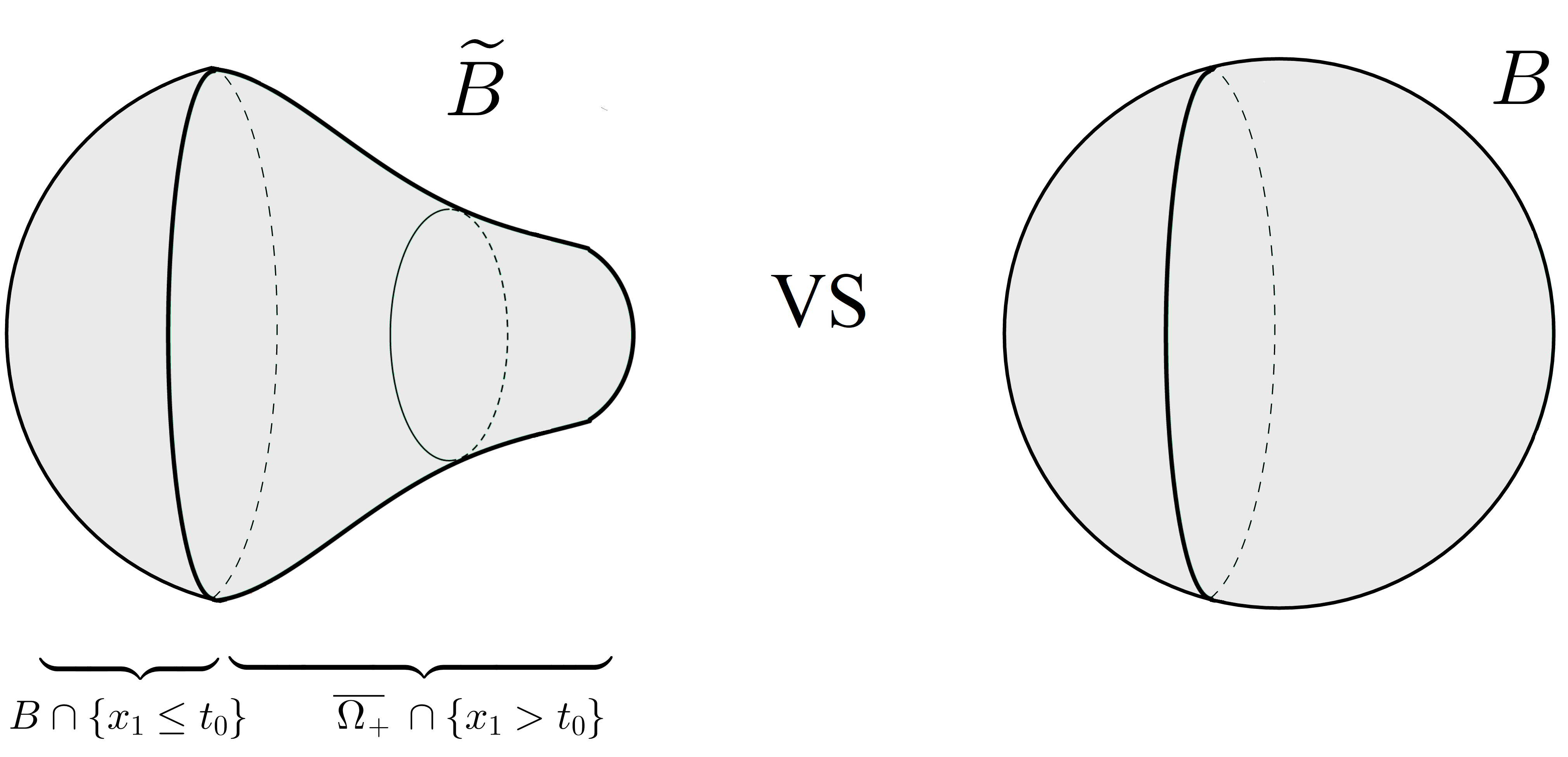}
\caption{The construction employed in the proof of Theorem \ref{thm spherical caps}.
}
\label{competitor}
\end{figure}

 Since $B$ and $\widetilde B$ have the same volume by construction, then, by the isoperimetric inequality, $\cH^{N-1}(\pa B)\le \cH^{N-1}(\pa \widetilde B)$, with equality holding true if and only if $B=\widetilde B$. By \eqref{prop of B} and the minimality of $\Om$, this implies that $\ol{\Om_+}\cap\{x_1>t_0\}=B\cap\{x_1>t_0\}$ and, hence, $\Ga_+\cap\{x_1>t_0\}$ must be a spherical cap. In particular, the set $I\cap [t_0,\infty)$ is connected. As a matter of fact, we claim that the set $I\cap(0,\infty)$ is connected. Indeed, let us assume, by contradiction, that there exists some value $t_1\in I \cap (0,t_0)$ that belongs to a different connected component. Now, performing one more time the rearrangement \eqref{a rearrangement} with $t_1$ instead of $t_0$ yields that the set $I\cap [t_1,\infty)$ must be connected, which is a contradiction. Therefore, $I\cap(0,\infty)$ is connected and $\Ga_+\cap \{x_1>t_0\}$ is a spherical cap. By analyticity, this implies that the whole $\Ga_+$ must be either a spherical cap or a sphere, as claimed. The same conclusion holds true for $\Ga_-$.

\end{proof}

Theorem \ref{thm spherical caps} suggests to us that the following two possibilities for a minimizer will be taken into account.

\begin{definition}\label{def type1 or type2}
Let $\Om$ be a minimizer of \eqref{min pb}. Then, $\Om$ is connected and $\Ga_\pm$ are spherical caps. This can only happen in one of the following two ways (see also Figure \ref{types}).
\begin{itemize}
\item {\bf Type~I minimizer.} The boundaries of the manifolds $\Ga_\pm$ coincide and $\overline{\Om}\cap\Sigma$ is an $(N-1)$-dimensional ball. In this case, $\cH^{N-1}(\Ga_0)=0$.
\item {\bf Type~II minimizer.} $\overline{\Om}\cap\Sigma$ is an $(N-1)$-dimensional ball, but this time the boundaries of the manifolds $\Ga_\pm$ are $(N-2)$-spheres whose radii have distinct lengths and centers do not necessarily coincide. In this case, $\cH^{N-1}(\Ga_0)>0$.
\end{itemize}

\end{definition}

\begin{figure}[ht]
\centering
\includegraphics[width=0.6\linewidth]{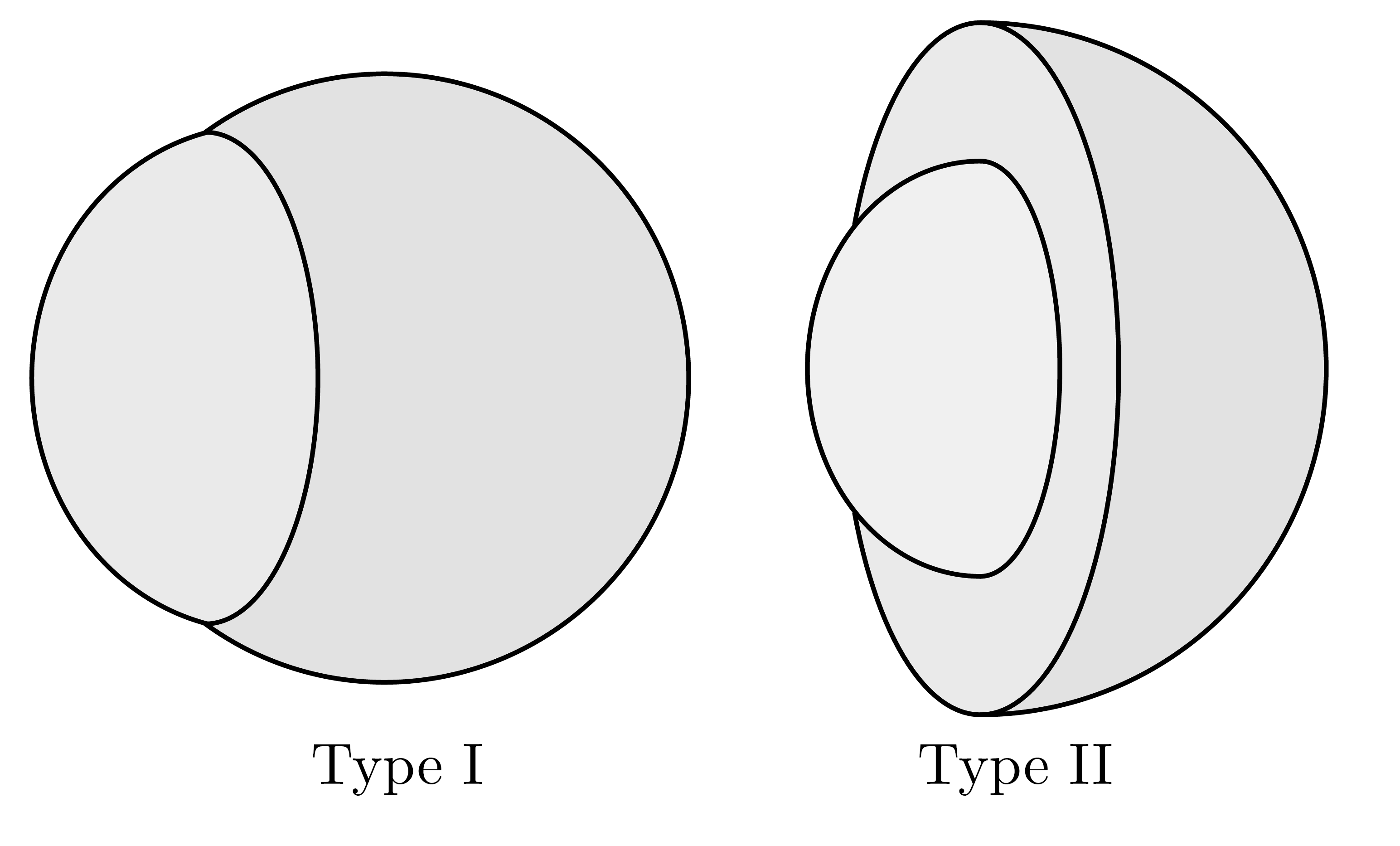}
\caption{The only two possible types of minimizers.
}
\label{types}
\end{figure}


\section{Existence of a minimizer and its regularity} \label{ch existence}

Here, we will finally prove the existence of a minimizer for Problem \eqref{min pb}. In order to do this, we will first need to generalize the rearrangement technique employed in the proof of Theorem \ref{thm spherical caps} to a general set of finite perimeter. Indeed, the situation is quite standard in each half-space since we deal then with a classical isoperimetric
problem, but the difficulty is to deal with the part of the boundary which may be on $\Sigma$ (since, a priori, we have little information about the regularity of the set there).

\begin{lemma}\label{Schwarz symmetrization B}
Let $\Om$ be a set of finite perimeter in $\rn$ with $|\Om|<+\infty$. Then, there exists a set of finite perimeter $\Omega^\sharp$ in $\mathbb R^N$ which is axially symmetric with respect to $\ell$ such that each of $\Ga^\sharp_\pm = \partial\Omega^\sharp \cap \rn_\pm$ is either a spherical cap or a sphere and the following hold:
 $$
|\Om^\sharp_\pm| = |\Om^*_\pm|,\  P(\Omega^\sharp, \rn_\pm)  \le P(\Omega^*, \rn_\pm),\ \cH^{N-1}(\Ga^\sharp_0) \le \cH^{N-1}(\Ga^*_0) \ \mbox{ and }\  \cF(\Om^\sharp) \le \cF(\Om^*),
$$
 respectively, where $\Om^\sharp_\pm$ and $\Ga^\sharp_0$ follow notations \eqref{some notations}. 
\end{lemma}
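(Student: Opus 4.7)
The plan is to construct $\Om^\sharp$ explicitly from the Schwarz symmetrization $\Om^*$ furnished by Lemma~\ref{Schwarz symmetrization A} and to verify each of the stated inequalities through a matching construction combined with the rearrangement argument used in the proof of Theorem~\ref{thm spherical caps}. By the axial symmetry of $\Om^*$ around $\ell$, for a.e.\ $t\in\RR$ the slice $\Om^*\cap\{x_1=t\}$ is $\cH^{N-1}$-equivalent to a closed $(N-1)$-ball centered on $\ell$ of some radius $R^*(t)\ge 0$. Set $V^*_\pm:=|\Om^*_\pm|$ and let $r^*_\pm\ge 0$ denote the radii of the concentric $(N-1)$-disks $D^*_\pm\subset\Si$ obtained as the traces of $\chi_{\Om^*_\pm}$ on $\Si$ (well defined up to $\cH^{N-1}$-null sets, because $\Om^*$ has finite perimeter and is axially symmetric). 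For each sign an elementary monotonicity argument gives a unique axially symmetric spherical cap $\Om^\sharp_\pm\subset\rn_\pm$ of volume $V^*_\pm$ whose trace on $\Si$ is precisely $D^*_\pm$ (the degenerate case $r^*_\pm=0$ yielding a full ball tangent to $\Si$); set $\Om^\sharp:=\Om^\sharp_+\cup\Om^\sharp_-$.

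The identity $|\Om^\sharp_\pm|=|\Om^*_\pm|$ and the matching of the traces on $\Si$ both hold by construction, so $\Ga^\sharp_0$ and $\Ga^*_0$ agree $\cH^{N-1}$-almost everywhere with the symmetric difference $D^*_+\triangle D^*_-$; in particular $\cH^{N-1}(\Ga^\sharp_0)=\cH^{N-1}(\Ga^*_0)$. The remaining inequality $P(\Om^\sharp,\rn_\pm)\le P(\Om^*,\rn_\pm)$ is treated on the $+$ side only, the $-$ side being symmetric. Fix $t_0>0$ with $R^*(t_0)>0$ and follow the rearrangement of Theorem~\ref{thm spherical caps}: build the ball $B_{t_0}$ with $B_{t_0}\cap\{x_1=t_0\}=\overline{\Om^*}\cap\{x_1=t_0\}$ and $|B_{t_0}\cap\{x_1>t_0\}|=|\Om^*\cap\{x_1>t_0\}|$, and form the competitor $\widetilde{B}_{t_0}=(B_{t_0}\cap\{x_1\le t_0\})\cup(\overline{\Om^*}\cap\{x_1>t_0\})$. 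Since $B_{t_0}$ and $\widetilde{B}_{t_0}$ have the same volume, the classical isoperimetric inequality gives $\cH^{N-1}(\pa B_{t_0}\cap\{x_1>t_0\})\le \cH^{N-1}(\pa\Om^*\cap\{x_1>t_0\})$. As $t_0\to 0^+$, the continuity of the parameters of a ball with respect to its prescribed cross-section and upper volume forces $B_{t_0}\cap\{x_1>t_0\}\to\Om^\sharp_+$ in measure, so the left-hand side converges to $P(\Om^\sharp,\rn_+)$, while the right-hand side tends to $P(\Om^*,\rn_+)$ by monotone convergence of the relative perimeter. Combining these inequalities with the weights $\rho_\pm$ and the interface identity yields $\cF(\Om^\sharp)\le\cF(\Om^*)$.

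The principal obstacle is to handle rigorously the trace of $\Om^*$ on $\Si$ and the passage $t_0\to 0^+$, since no pointwise regularity is a priori known for $\Om^*$ at the interface. As in the proof of Lemma~\ref{Schwarz symmetrization A}, this should be carried out via the approximating half-spaces $H_{\pm\ve_n}$ with $\ve_n\downarrow 0$ together with a careful decomposition of the Gauss--Green measures, so that all estimates are first performed in the open half-spaces and the contributions on $\Si$ are captured only in the limit.
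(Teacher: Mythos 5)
Your construction of $\Om^\sharp$ and your treatment of the lateral terms follow the paper's route: you match the cross-section and the outer volume with a ball, apply the isoperimetric inequality to the hybrid competitor, and let the cutting level tend to $\Si$; the paper implements exactly this through the approximating sets $\Om^n$ in \eqref{construction of Omega sharp_n} and the lower semicontinuity step \eqref{a use of the lower semicontinuity}. Two small remarks on your limit passage: lower semicontinuity of the relative perimeter under convergence in measure is all you need on the left-hand side, so you do not have to prove that $\cH^{N-1}(\pa B_{t_0}\cap\{x_1>t_0\})$ converges, only a liminf bound; and the convergence $R^*(t_0)\to r^*_+$ holds only in an essential-limit sense (or along a subsequence, which is precisely the Bolzano--Weierstrass extraction along the levels $\pm\ve_n$ in the paper). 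You acknowledge and defer this technical core, which is acceptable as an outline since you point to the same device the paper uses.

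The genuine problem is the interface term. You assert that $\Ga^*_0=\pa\Om^*\cap\Si$ agrees $\cH^{N-1}$-a.e.\ with the trace jump set $D^*_+\triangle D^*_-$ and conclude the equality $\cH^{N-1}(\Ga^\sharp_0)=\cH^{N-1}(\Ga^*_0)$. This is false in general: $\Ga_0$ is defined through the \emph{topological} boundary, which on $\Si$ may carry $\cH^{N-1}$-measure invisible to the one-sided traces. For instance, if $\Om^*$ coincides (for the chosen representative) with $\{0<|x_1|<1,\ |x'|<1\}$, then both one-sided traces on $\Si$ are the unit disk, so $D^*_+\triangle D^*_-=\emptyset$, while $\pa\Om^*\cap\Si$ contains the whole closed unit disk of $\Si$ and has positive $\cH^{N-1}$-measure. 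What is true, and all that the lemma needs, is the one-sided estimate $\cH^{N-1}(\Ga^\sharp_0)=\cH^{N-1}(D^*_+\triangle D^*_-)\le\cH^{N-1}(\Ga^*_0)$: at $\cH^{N-1}$-a.e.\ point of $\Si$ where the two traces differ, every neighborhood meets $\Om^*$ in positive measure on one side and its complement in positive measure on the other, so such points belong to $\pa\Om^*$; alternatively, argue as the paper does by comparing $P(\Om^\sharp,F_\ve)$ with $\liminf_n P(\Om^n,F_\ve)$ in shrinking strips, which never requires identifying $\Ga^*_0$ itself. Replace your equality claim by this inequality and the proof closes along the paper's lines.
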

\begin{proof}
By Lemma \ref{Schwarz symmetrization A} we know that the set $\Om^*$ is well defined. Now, using the same notation as in Lemma \ref{Schwarz symmetrization A}, by \eqref{identity 2 for n star} and \eqref{estimates from above for every n} we have that, for every $n \in \mathbb N$
 $$
 \cH^{N-1}(\Omega^*\cap\partial H_{-\varepsilon_n})  \le P(\Omega^*)\ \mbox{ and similarly }  \cH^{N-1}(\Omega^*\cap\partial H_{\varepsilon_n})  \le P(\Omega^*).
 $$
 Therefore,  by the Bolzano-Weierstrass theorem,  up to a subsequence, we may assume that, as $n \to \infty$, the sequences of the radii of the  $(N-1)$-dimensional balls $\Omega^*\cap\partial H_{\pm\varepsilon_n}\ (n \in \mathbb N)$ centered at  $\ell\cap\partial H_{\pm\varepsilon_n}$ converge to some nonnegative numbers $r_\pm$, respectively. Let $D_\pm$ be the two $(N-1)$-dimensional closed balls in $\Sigma$ centered at $\ell\cap\Sigma$ with radii $r_\pm$, respectively. Let $B_\pm$ denote the two closed balls in $\mathbb R^N$ determined by
 \begin{equation}
 \label{construction of the two spherical caps}
 B_\pm \cap \Sigma = D_\pm\ \mbox{ and }\ |B_\pm \cap \mathbb R^N_\pm| = |\Omega^*_\pm|,
 \end{equation}
 respectively. 

\begin{figure}[ht]
\centering
\includegraphics[width=0.6\linewidth]{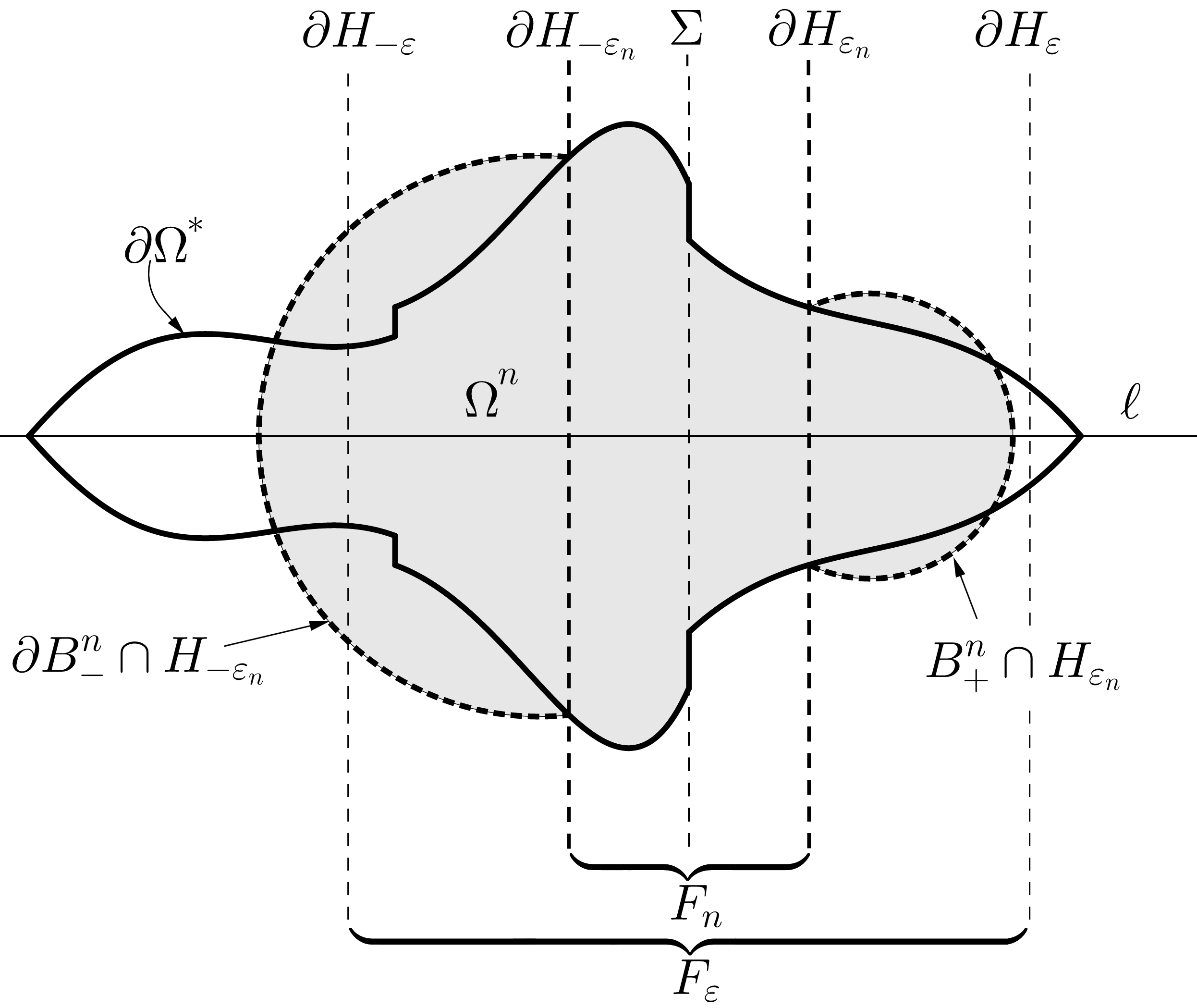}
\caption{The construction employed in the proof of Lemma \ref{Schwarz symmetrization B}. For ease of understanding, the boundary of the set $\Om^*$ is denoted by a bold line, while the interior of $\Om^n$ is shaded. 
} 
\label{construction}
\end{figure}

 Then we set
 \begin{equation}
 \label{construction of Omega sharp}
 \Omega^\sharp = (B_-\cap \overline{ \mathbb R^N_-}) \cup (B_+\cap\overline{ \mathbb R^N_+}).
 \end{equation}
 Hence $|\Omega^\sharp_\pm| = |\Omega^*_\pm|$, respectively. Notice that if $r_+ (\mbox{or } r_-)$ equals $0$, then $\overline{\Omega^\sharp_+} (\mbox{or } \overline{\Omega^\sharp_-} )$ equals the ball $B_+(\mbox{or } B_-)$. 
 Moreover, for every $n \in \mathbb N$, let $B^n_\pm$ denote the two closed balls in $\mathbb R^N$ determined by
 \begin{equation}
 \label{construction of the two approximate spherical caps}
 B^n_\pm \cap \partial H_{\pm\varepsilon_n} = \overline{\Omega^*}\cap\partial H_{\pm\varepsilon_n}\ \mbox{ and }\ |B^n_\pm \cap H_{\pm\varepsilon_n}| = |\Omega^* \cap H_{\pm\varepsilon_n}|,
 \end{equation}
 respectively.  Then, for every $n \in \mathbb N$, we set 
  \begin{equation}
 \label{construction of Omega sharp_n}
 \Omega^n = (B^n_-\cap \overline{H_{-\varepsilon_n}}) \cup (\Omega^*\cap F_n) \cup (B^n_+\cap \overline{ H_{\varepsilon_n}}).
 \end{equation}
 Hence $|\Omega^n_\pm| = |\Omega^*_\pm|$, respectively.
 Define 
\begin{equation*}
\widetilde B^n_\pm = \left( \Omega^*\cap H_{\pm\varepsilon_n}  \right) \cup (B^n_\pm \setminus H_{\pm\varepsilon_n}),
\end{equation*} 
respectively.  Since $B^n_\pm$ and $\widetilde B^n_\pm$ have the same volume respectively, we have from the classical isoperimetric inequality that $\cH^{N-1}(\pa B^n_\pm)\le P(\widetilde B^n_\pm)$ respectively. Hence it follows that for every $n \in \mathbb N$
$$
\cH^{N-1}(\pa B^n_\pm\cap \overline{H_{\pm\varepsilon_n}}) \le P(\Omega^*, H_{\pm\varepsilon_n}),
$$
respectively.
Then, we observe that for every $n \in \mathbb N$
$$
P(\Omega^n, \rn_\pm)  \le P(\Omega^*, \rn_\pm)\ \mbox{ and }\ \cH^{N-1}(\Ga^n_0) = \cH^{N-1}(\Ga^*_0), 
$$
 respectively, where $\Ga^n_0 = \partial\Omega^n \cap\Sigma$.  Another observation is that  $\{\Omega^n\}_n$ converges to  $\Omega^\sharp$ 
 in the sense of their characteristic functions as $n \to \infty$. Hence the lower semicontinuity  of the perimeter yields that 
 \begin{equation}
 \label{a use of the lower semicontinuity}
 P(\Om^\sharp, \rn_\pm) \le \liminf_{n \to \infty} P(\Om^n, \rn_\pm) \le P(\Omega^*, \rn_\pm) \mbox{ and }\ P(\Om^\sharp, F_\varepsilon) \le \liminf_{n \to \infty} P(\Om^n, F_\varepsilon),
 \end{equation}
 where $\varepsilon > 0 $ is an arbitrary number and $F_\varepsilon = \{ x \in \mathbb R^N : -\varepsilon < x_1 < \varepsilon \}$.
 Since  
 $$
 P(\Om^n, F_\varepsilon) = \cH^{N-1}(\pa\Om^n \cap (F_\varepsilon\setminus F_n)) + P(\Om^*, F_n), 
 $$
  we see that 
 \begin{equation*}
 \liminf_{n \to \infty} P(\Om^n, F_\varepsilon)   = \liminf_{n\to\infty} \cH^{N-1}(\pa\Om^n \cap (F_\varepsilon\setminus F_n)) +\cH^{N-1}(\Ga^*_0).
 \end{equation*}
By recalling that as $n \to \infty$ the sequence of radii of the  $(N-1)$-dimensional balls $\Omega^*\cap\partial H_{\pm\varepsilon_n}\ (n \in \mathbb N)$
converges to the nonnegative numbers $r_\pm$, respectively, we infer that
$$
\liminf_{\varepsilon \to 0} \liminf_{n\to\infty} \cH^{N-1}(\pa\Om^n \cap (F_\varepsilon\setminus F_n))  = 0,
$$
and hence
$$
\liminf_{\varepsilon \to 0}\liminf_{n \to \infty} P(\Om^n, F_\varepsilon) = \cH^{N-1}(\Ga^*_0).
$$
Moreover, 
$$
\lim_{\varepsilon \to 0} P(\Om^\sharp, F_\varepsilon) = |\cH^{N-1}(D_+)-\cH^{N-1}(D_-)| = \cH^{N-1}(\Ga^\sharp_0).
$$
Therefore it follows from \eqref{a use of the lower semicontinuity} that 
$$
\cH^{N-1}(\Ga^\sharp_0) \le  \cH^{N-1}(\Ga^*_0),\ P(\Om^\sharp, \rn_\pm) \le P(\Omega^*, \rn_\pm)\ \mbox{ and hence } \cF(\Om^\sharp) \le \cF(\Om^*),
$$
 which completes the proof.  
\end{proof}

\begin{theorem}\label{thm type1 or type2}
The problem \eqref{min pb} has a minimizer $\Omega$. Moreover it is of one of the two types described in Definition {\rm \ref{def type1 or type2}}. 
\end{theorem}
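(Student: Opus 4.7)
My plan is to use the direct method, relying on Lemmas \ref{Schwarz symmetrization A} and \ref{Schwarz symmetrization B} to reduce the minimization to a finite-dimensional problem. First I would note that the infimum $I$ of \eqref{min pb} is finite and nonnegative (finiteness follows from testing against, e.g., two disjoint half-balls of prescribed volumes centered on $\Sigma$). Let $\{\Om_n\}$ be a minimizing sequence. Applying Lemma \ref{Schwarz symmetrization A} and then Lemma \ref{Schwarz symmetrization B} to each $\Om_n$ yields a new minimizing sequence (still denoted $\{\Om_n\}$) whose members are all axially symmetric about a common line $\ell$ orthogonal to $\Sigma$ and of the explicit form $\Om_n = (B_-^n \cap \overline{\rn_-}) \cup (B_+^n \cap \overline{\rn_+})$. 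The balls $B_\pm^n$ are fully determined by the radii $r_\pm^n \in [0,\infty)$ of the base disks $D_\pm^n = B_\pm^n \cap \Sigma$ (centered at the fixed point $\ell \cap \Sigma$) together with the volume constraints $\rho_\pm|B_\pm^n \cap \overline{\rn_\pm}| = V_\pm$; conversely, each admissible pair $(r_+, r_-)$ uniquely determines such a two-cap configuration.

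For compactness, a straightforward geometric estimate shows that, for fixed volume $V > 0$, the $(N-1)$-dimensional Hausdorff measure of a spherical cap with base disk of radius $r$ grows unboundedly as $r \to \infty$. Consequently, if $r_\pm^n$ were unbounded along a subsequence, then either $\rho_+ P(\Om_n, \rn_+)$ or $\rho_- P(\Om_n, \rn_-)$, and hence $\cF(\Om_n)$, would diverge, contradicting $\cF(\Om_n) \to I$. Therefore the sequences $\{r_\pm^n\}$ are uniformly bounded and, up to extracting a subsequence, I may assume $r_\pm^n \to r_\pm^\infty \in [0,\infty)$.

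Let $\Om^\infty$ be the set obtained from the construction of Lemma \ref{Schwarz symmetrization B} with parameters $(r_+^\infty, r_-^\infty)$. The assignment $(r_+, r_-) \mapsto \Om(r_+, r_-)$ is continuous into $L^1(\rn)$, and the functional $\cF$ restricted to this two-parameter family is continuous in $(r_+, r_-)$, since all the geometric quantities entering it (the two cap perimeters and the $\cH^{N-1}$-measure of the annulus on $\Sigma$) depend continuously on the radii. Hence $\rho_\pm|\Om^\infty_\pm| = V_\pm$ and $\cF(\Om^\infty) = \lim_n \cF(\Om_n) = I$, so $\Om^\infty$ is a minimizer.

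The classification is then immediate from the construction: each $\Ga^\infty_\pm = \pa\Om^\infty \cap \rn_\pm$ is a spherical cap (or, in the degenerate case $r_\pm^\infty = 0$, a full sphere tangent to $\Sigma$) with base disk of radius $r_\pm^\infty$. If $r_+^\infty = r_-^\infty$ the two base disks coincide, so $\cH^{N-1}(\Ga_0^\infty) = 0$ and $\Om^\infty$ is of type~I; if $r_+^\infty \neq r_-^\infty$ the base disks are concentric of distinct radii and $\Ga_0^\infty$ is the annular difference, of positive $\cH^{N-1}$-measure, so $\Om^\infty$ is of type~II. The step I expect to require the most care is the a priori bound ruling out $r_\pm^n \to \infty$; continuity of $\cF$ in $(r_+, r_-)$ is essentially a routine calculation, and Lemmas \ref{Schwarz symmetrization A} and \ref{Schwarz symmetrization B} already do the heavy lifting of reducing an infinite-dimensional variational problem to this manageable two-parameter minimization.
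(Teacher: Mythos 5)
Your existence argument follows essentially the same route as the paper: a minimizing sequence, reduction via Lemmas \ref{Schwarz symmetrization A} and \ref{Schwarz symmetrization B} to two-cap configurations, an a priori bound on the base radii (the paper's version of your coercivity remark), and passage to the limit along a subsequence. The only real difference is cosmetic: you pass to the limit by finite-dimensional compactness in $(r_-,r_+)$ together with continuity of $\cF$ on that two-parameter family, whereas the paper uses BV compactness and lower semicontinuity of the relative perimeters plus convergence of the disks $D^k_\pm$; either works once the radii are bounded, though your continuity of the family (including at $r=0$, where the cap degenerates into a full tangent sphere) is asserted rather than checked.

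The genuine gap is in the classification step. You explicitly allow the degenerate limits $r_\pm^\infty=0$, in which $\Ga_\pm^\infty$ is a whole sphere tangent to $\Sigma$, and then classify every limit with $r_+^\infty\neq r_-^\infty$ as type~II. But in Definition \ref{def type1 or type2} a type~II minimizer requires the boundaries of \emph{both} manifolds $\Ga_\pm$ to be $(N-2)$-spheres of distinct (positive) radii; a configuration in which one of the $\Ga_\pm$ is a full sphere is of neither type, so as written your proof does not establish the ``Moreover'' part of the theorem. Ruling out the sphere case is not automatic from the construction: the paper does it with Corollary \ref{corsphere}, i.e.\ a first-variation computation showing that $\frac{d}{d\al}\bigl(J_-(\al)\,I_-(\al)^{(1-N)/N}\bigr)<0$ near $\al=0$ under the volume constraint, while $\cH^{N-1}(\Ga_0)$ also decreases, so replacing a whole sphere by a slightly truncated cap strictly lowers $\cF$ and no minimizer can contain a spherical component. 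You need to add this (or an equivalent) argument — either applied to your limit configuration, or, as the paper does, by citing Theorem \ref{thm spherical caps} together with Corollary \ref{corsphere} to classify any minimizer.
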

\begin{proof}
Let $\Omega^k$ be a minimizing sequence. Following \eqref{some notations} we introduce $\Omega^k_\pm$,
$\Gamma^k_\pm$ and $\Gamma^k_0$. 
By Lemma \ref{Schwarz symmetrization B}, we may assume that  $\Ga^k_\pm$ are spherical caps.
Moreover, let $D^k_\pm$ denote the two ($N-1$)-dimensional balls given by the intersections $\ol\Om^k_\pm\cap\Sigma$ and let $R_\pm^k$ be their radii. In particular, since $\Om^k$ is a minimizing sequence for the functional $\cF$, we know that the sequence of radii $R_\pm^k$ must be bounded (as $\cH^{N-1}(\Ga_\pm^k)$ would diverge to infinity otherwise). This means that the whole sequence of sets $\Om^k$ is contained in a large enough compact set.
By compactness (compact embedding from
the BV-space to $L^1_{\rm loc}$, see \cite[Chapter 2]{HP}), up to extracting a subsequence, we can assume that:
\begin{itemize}
\item $\Omega^k_\pm$ converges to some $\Omega_\pm$ (in the sense of characteristic functions),
\item each $D^k_\pm$ converges to a ball $D_\pm$ (convergence of their radii).
\end{itemize}
Moreover, by property of this convergence, in particular the lower semicontinuity of the perimeter, we have
$$
\rho_\pm |\Om_\pm|=V_\pm, \quad P(\Omega, \rn_\pm)\leq \liminf_{k \to \infty} P(\Omega^k, \rn_\pm).
$$
Together with the convergence of the balls $D^k_\pm$ and the fact that $\cH^{N-1}(\Gamma_0)=|\cH^{N-1}(D_-)-\cH^{N-1}(D_+)|$, this implies that $\cF(\Omega)\leq \liminf_{k \to \infty} \cF(\Omega^k)$ and thus $\Omega$ is a minimizer of  problem \eqref{min pb}. 
The second part of the statement directly follows from Theorem \ref{thm spherical caps}.
In particular, to rule out the case of a sphere in one of the half-space, we can use
Corollary \ref{corsphere} below.
\end{proof}


\section{Complete characterization of the two types of minimizers} \label{ch snell law}

\subsection{Some preliminary geometrical lemmas}
By Theorem \ref{thm type1 or type2} we know that there are only two kinds of minimizers, which, without loss of generality, can be assumed to be symmetric with respect to rotations around the $x_1$ axis. Under this additional symmetry assumption, each candidate set becomes a (piecewise) hypersurface of revolution, whose generatrix can be uniquely described by four parameters (the radii $R_\pm$, and the angles of incidence $\al$ and $\beta$, or the radii $R_\pm$ and the pair of centers $a$ and $b$) as shown in Figure \ref{crosssection}. We will refer to each candidate set by means of those parameters as $\Om(\al,\be,R_\pm)$.
\begin{figure}[ht]
\centering
\includegraphics[width=0.7\linewidth]{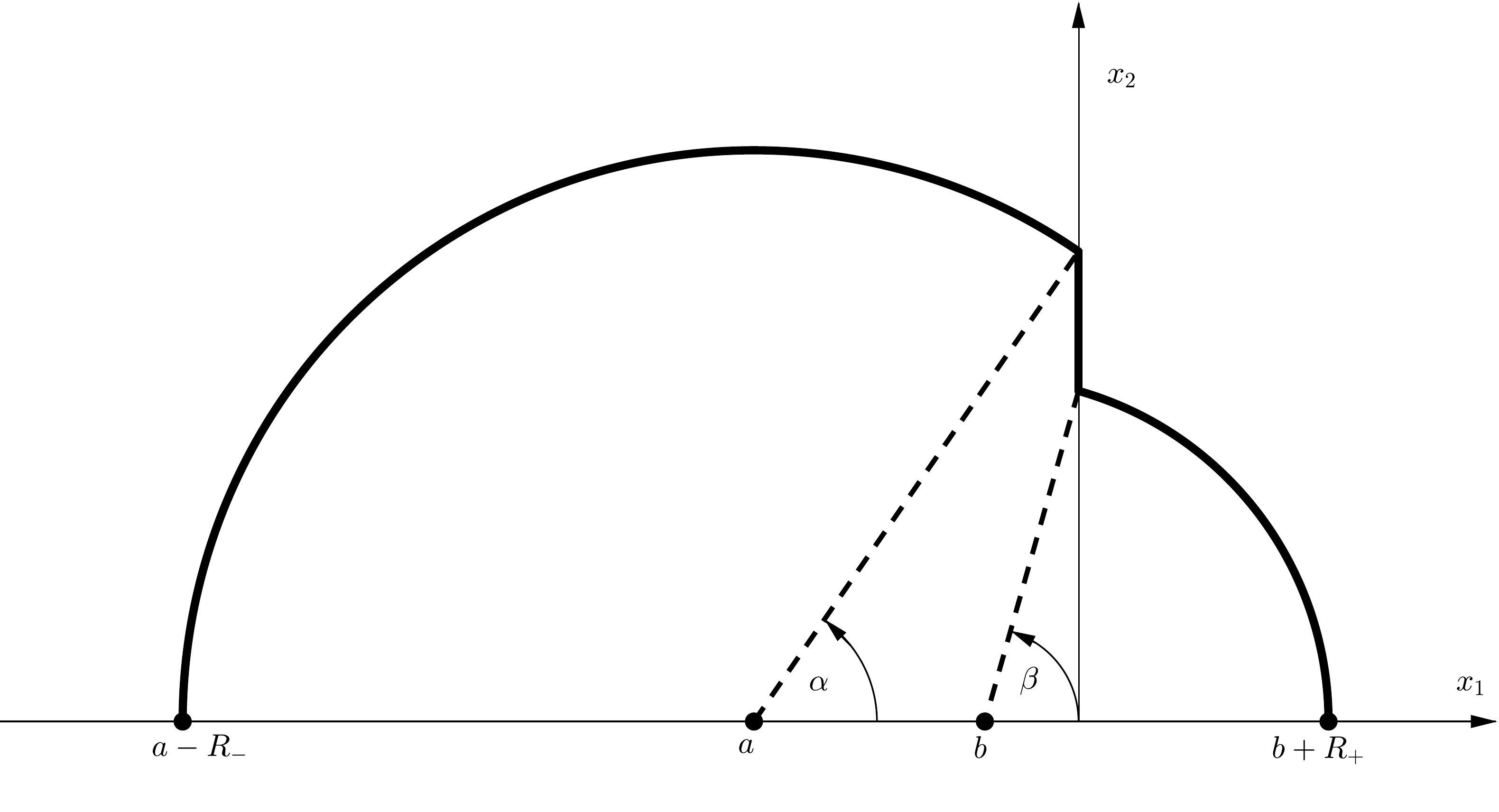}
\caption{Cross section of a minimizer (of type~II).
} 
\label{crosssection}
\end{figure}
In what follows, we are going to give explicit formulas for computing the Lebesgue measure of $\Om_\pm(\al,\be,R_\pm)$ and the ($N-1$)-dimensional Hausdorff measure of $\Ga_\pm(\al,\be,R_\pm)$ and $\Ga_0(\al,\be,R_\pm)$. We will also employ the use of the following shorthand notation:
\begin{equation}\label{J,I}
\begin{aligned}
&J_- =J_-(\al)= \int_\al^\pi (\sin \theta)^{N-2}\,d\theta, \quad 
&J_+ = J_+(\be)=\int_0^\be (\sin \theta)^{N-2}\,d\theta, \\
&I_- =I_-(\al)= \int_\al^\pi (\sin \theta)^{N}\,d\theta, \quad 
&I_+ = I_+(\be)=\int_0^\be (\sin \theta)^{N}\,d\theta, \\
& s_-=s_-(\al)=\sin \al, \quad s_+=s_+(\be)=\sin\be,\\
&c_-=c_-(\al)=\cos\al, \quad c_+=c_+(\be)=\cos\be.
\end{aligned}
\end{equation}

\begin{lemma}\label{some geometric formulas}
Let $\Om=\Om(\al,\be,R_\pm)$ and let $\om$ denote the {\rm($N-1$)}-dimensional volume of the {\rm($N-1$)}-dimensional unit ball. Then the following formulas hold true.
\begin{equation*}
    |\Om_\pm|=\om R_\pm ^N I_\pm,\quad \cH^{N-1}(\Ga_\pm) = (N-1)\om R_\pm^{N-1}J_\pm, \quad \cH^{N-1}(\Ga_0)= \om\, \left|(R_-s_-)^{N-1} - (R_+s_+)^{N-1}\right|.
\end{equation*}
\end{lemma}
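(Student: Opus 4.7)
The plan is to verify the three formulas separately, exploiting the rotational symmetry of $\Omega(\alpha,\beta,R_\pm)$ around the $x_1$-axis. In each case the key step is a change of variables from a Cartesian slicing parameter to the polar angle measured from the axis, after which the integrals of $\sin^{N-2}\theta$ and $\sin^N\theta$ collected in \eqref{J,I} appear automatically.

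\medskip

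\noindent\textbf{Volume of the spherical caps.} Up to translation along the axis, $\Omega_+$ coincides with the cap $\{x \in B(c_+,R_+) : x_1 \ge c_+\cdot e_1 + R_+\cos\beta\}$, where $c_+$ is the center of the right sphere. Slicing with the hyperplanes $\{x_1 = t\}$, Fubini's theorem yields
\[
|\Omega_+| \;=\; \omega \int_{R_+ c_+}^{R_+} (R_+^2 - s^2)^{(N-1)/2}\,ds,
\]
where I have shifted the origin to $c_+$ so that the slice at height $s\in[R_+\cos\beta,R_+]$ is an $(N-1)$-ball of radius $\sqrt{R_+^2-s^2}$ and $(N-1)$-measure $\omega (R_+^2-s^2)^{(N-1)/2}$. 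The substitution $s = R_+\cos\theta$ turns this into $\omega R_+^N \int_0^\beta \sin^N\theta\,d\theta = \omega R_+^N I_+(\beta)$. The computation for $\Omega_-$ is identical after reflecting $x_1 \mapsto -x_1$ and using the convention $\alpha \in [\pi/2,\pi]$, which replaces the interval $[0,\beta]$ by $[\alpha,\pi]$ and produces $I_-(\alpha)$.

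\medskip

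\noindent\textbf{Surface area of the spherical caps.} I would parametrize the sphere of radius $R_+$ (with center on the axis) in the standard spherical coordinates built on the $x_1$-axis: a point on it is $(c_+ + R_+\cos\theta, R_+\sin\theta\cdot\sigma)$ with $\theta \in [0,\pi]$ and $\sigma\in\mathbb{S}^{N-2}\subset\mathbb{R}^{N-1}$. The induced surface element is $R_+^{N-1}\sin^{N-2}\theta\,d\theta\,d\sigma$, and $\Gamma_+$ corresponds exactly to the angular range $\theta\in[0,\beta]$. Using $\mathcal H^{N-2}(\mathbb{S}^{N-2}) = (N-1)\omega$ (which follows from the identity Vol$(B_{N-1}) = \omega$ and the standard relation $\mathcal H^{n-1}(\mathbb{S}^{n-1}) = n\,\mathrm{Vol}(B_n)$), I obtain
\[
\mathcal H^{N-1}(\Gamma_+) \;=\; (N-1)\omega R_+^{N-1}\int_0^\beta \sin^{N-2}\theta\,d\theta \;=\; (N-1)\omega R_+^{N-1} J_+(\beta),
\]
and the analogous computation for $\Gamma_-$ uses the angular range $[\alpha,\pi]$.

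\medskip

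\noindent\textbf{Area of $\Gamma_0$.} By axisymmetry around the $x_1$-axis, the two disks $D_\pm = \overline{\Omega_\pm}\cap\Sigma$ are concentric $(N-1)$-balls with common center on the axis. Their radii are precisely the radii of the circular cross-sections of the two spheres with $\Sigma$, namely $R_+\sin\beta = R_+ s_+$ and $R_-\sin\alpha = R_- s_-$. The set $\Gamma_0 = \partial\Omega\cap\Sigma$ is then the annular region consisting of the symmetric difference $D_-\triangle D_+$, whose $(N-1)$-measure is the absolute difference of the measures of the two disks:
\[
\mathcal H^{N-1}(\Gamma_0) \;=\; \bigl|\,\omega(R_-s_-)^{N-1} - \omega(R_+ s_+)^{N-1}\,\bigr|.
\]

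None of the three steps is really delicate; the only thing to be careful about is the angular convention (measuring $\alpha$ from $\pi$ on the left and $\beta$ from $0$ on the right) so that the integrals $I_\pm$, $J_\pm$ come out with the ranges given in \eqref{J,I} and the signs $c_\pm = \cos\alpha,\cos\beta$ and $s_\pm$ match Figure \ref{crosssection} in both the acute and obtuse regimes.
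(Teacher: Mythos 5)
Your proof is correct and follows essentially the same route as the paper: Cavalieri slicing with the substitution $x=R_\pm\cos\theta$ for the volumes, the spherical-coordinate surface element (equivalent to the paper's cited surface-of-revolution area formula, with $\mathcal{H}^{N-2}(\mathbb{S}^{N-2})=(N-1)\omega$) for $\mathcal{H}^{N-1}(\Gamma_\pm)$, and the concentric-annulus observation for $\Gamma_0$. The only blemish is notational: you use $c_+$ both for the center of the right sphere and, in the lower limit of the volume integral, for $\cos\beta$, so that limit should read $R_+\cos\beta$.
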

\begin{proof}
We will prove only the formulas corresponding to the subscript $_-$, as the others are analogous. 
First, by Cavalieri's principle as 
\begin{equation*}
    |\Om_-|=\om\int_{-R_-}^{R_-\cos\al} \left(\sqrt{R_-^2-x^2}\right)^{N-1}\, dx. 
\end{equation*}
Now, the substitution $x=R_-\cos\te$ yields
\begin{equation*}
    |\Om_-|= \om \int_\al^\pi R_-^N (\sin\te)^N\, d\te = \om R_-^N I_-.
\end{equation*}
The value of $\cH^{N-1}(\Ga_\pm)$ can be easily computed by considering $\Gamma_-$ as a hypersurface of revolution in $\rn$.
Indeed, if we set $y(x)=\sqrt{R_-^2-x^2}$, by the area formula for surfaces of revolution in general dimension given in \cite{abe-agra} we get:
\begin{equation*}
    \cH^{N-1}(\Ga_-) = (N-1)\,\om \int_{-R_-}^{R_-\cos\al} y(x)^{N-2} \sqrt{1+(y'(x))^2}\, dx.
\end{equation*}
Now, recalling that $\sqrt{1+(y'(x))^2}= R_-/y(x)$ and performing the substitution $x=R_-\cos\te$ yield 
\begin{equation*}
    \cH^{N-1}(\Ga_-) = (N-1)\, \om \int_\al^\pi R_-^{N-1} (\sin\te)^{N-2}\, d\te = (N-1)\,\om R_-^{N-1} J_-.
\end{equation*}
Finally, the value of $\cH^{N-1}(\Ga_0)$ is immediately computed by noticing that, in this case, $\Ga_0$ is just an ($N-1$)-dimensional spherical shell whose outer and inner radii are given by $\max(R_- s_-, R_+s_+)$ and $\min(R_- s_-, R_+s_+)$ respectively.  
\end{proof}

\begin{corollary}\label{corsphere}
If $\Om$ is an optimal set, then neither of $\Ga_\pm$ can be a whole sphere.
\end{corollary}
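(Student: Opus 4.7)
The plan is to argue by contradiction: assume, without loss of generality, that $\Ga_+$ is a whole sphere. Then $\overline{\Om_+}$ is a closed ball of radius $R_+$, and since a translation of $\Om_+$ parallel to the $x_1$-axis leaves $\cF(\Om)$ invariant, I may further assume that $\overline{\Om_+}$ is tangent to $\Sigma$ at the origin. I will then construct a competitor $\widetilde\Om$ with $\cF(\widetilde\Om)<\cF(\Om)$ by slightly pushing $\Om_+$ across $\Sigma$ to create a spherical cap with a small disk on $\Sigma$.

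The quantitative core of the argument is the following leading-order computation based on Lemma \ref{some geometric formulas}. For small $\delta>0$ I consider the spherical cap $\widetilde\Om_+^\delta\subset\overline{\rn_+}$ with half-angle $\pi-\delta$ and with radius $R_+^\delta$ adjusted to enforce the volume constraint $\omega(R_+^\delta)^N I_+(\pi-\delta)=V_+/\rho_+$. Taylor-expanding $I_+$ and $J_+$ around $\pi$, together with the implicit expansion of $R_+^\delta$, yields
\[
\cH^{N-1}(\widetilde\Ga_+^\delta)-\cH^{N-1}(\Ga_+)=-\omega R_+^{N-1}\delta^{N-1}+o(\delta^{N-1}),\qquad \cH^{N-1}(\widetilde D_+^\delta)=\omega R_+^{N-1}\delta^{N-1}+o(\delta^{N-1}),
\]
where $\widetilde D_+^\delta:=\overline{\widetilde\Om_+^\delta}\cap\Sigma$; informally, the spherical cap in $\rn_+$ loses, to leading order, exactly as much surface area as the newly formed disk $\widetilde D_+^\delta$ gains on $\Sigma$.

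I will then split into two cases according to Theorem \ref{thm spherical caps} applied to $\Ga_-$. If $\Ga_-$ is a spherical cap meeting $\Sigma$ in a disk $D_-$ of positive radius $r_->0$, I will translate $\widetilde\Om_+^\delta$ parallel to $\Sigma$ so that $\widetilde D_+^\delta$ fits concentrically inside $D_-$ (possible for $\delta$ small) and set $\widetilde\Om:=\widetilde\Om_+^\delta\cup\Om_-$. The disk $\widetilde D_+^\delta$ is then absorbed into $D_-$, so both the $\rho_+$-weighted perimeter and the interface term decrease by the same leading-order amount, giving
\[
\cF(\widetilde\Om)-\cF(\Om)=-(\rho_++\gamma)\,\omega R_+^{N-1}\delta^{N-1}+o(\delta^{N-1})<0
\]
for $\delta$ sufficiently small. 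If instead $\Ga_-$ is also a whole sphere, so that $\Om_-$ is itself a ball in $\rn_-$, I will apply the same perturbation on both sides simultaneously and tune the two push parameters so that the two newly formed disks coincide; then $\widetilde\Ga_0$ has zero $\cH^{N-1}$-measure and I obtain $\cF(\widetilde\Om)-\cF(\Om)=-(\rho_++\rho_-)\,\omega r^{N-1}+o(r^{N-1})<0$, where $r$ denotes the common disk radius. In either case, $\Om$ fails to be a minimizer, and the symmetric argument rules out $\Ga_-$ being a whole sphere.

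The main technical obstacle is the precise leading-order cancellation between the decrease in cap surface area and the increase in disk area. It reduces to the expansions $I_+(\pi-\delta)=I_+(\pi)-\delta^{N+1}/(N+1)+O(\delta^{N+3})$ and $J_+(\pi-\delta)=J_+(\pi)-\delta^{N-1}/(N-1)+O(\delta^{N+1})$ (the case $N=2$ being entirely analogous), combined with the volume constraint used to expand $R_+^\delta$; these expansions are routine but need to be carried out with some care, since both leading contributions are of the same order $\delta^{N-1}$ and the cancellation relies on their matching coefficients.
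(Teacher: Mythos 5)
Your proposal is correct and takes essentially the same route as the paper: replace the sphere by a volume-preserving spherical cap whose flat face lies on $\Sigma$, and show $\cF$ strictly decreases, treating separately the case where the opposite side meets $\Sigma$ in a disk of positive radius (so the new disk is absorbed and the interface term also drops) and the case where both $\Ga_\pm$ are spheres (matching the two new disks so $\Ga_0$ stays negligible). The only cosmetic difference is that you quantify the decrease by a Taylor expansion in the flattening parameter $\delta$, whereas the paper differentiates $J_-(\al)\,I_-(\al)^{(1-N)/N}$ near $\al=0$; the two computations are equivalent.
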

\begin{proof}
Without loss of generality, assume that $\Om$ is a type~II minimizer and the part of boundary $\Gamma_-$ is a whole sphere.
This corresponds to $\alpha=0$ in the previous notations. To get the thesis, it suffices to prove
that the total perimeter strictly decreases when we increase $\alpha$. Let us first assume that
$\Gamma_+$ is not a whole sphere, then $\Gamma_0$ is not empty and $\cH^{N-1}(\Gamma_0)$
will decrease if $\alpha$ increases. Let us now look at $\cH^{N-1}(\Gamma_-)$. By the volume constraint and Lemma \ref{some geometric formulas}, the radius of any spherical
cap (including the case of the sphere where $\alpha=0$)  is given by
$$R_-(\alpha)=\left(\frac{V_-}{\omega\rho_-I_-(\alpha)}\right)^{1/N}$$
while the perimeter is given in terms of $\alpha$ by
$$\cH^{N-1}(\Gamma_-(\alpha))=(N-1)\omega J_-(\alpha) \left(\frac{V_-}{\omega\rho_-I_-(\alpha)}\right)^{(N-1)/N}.$$
Therefore, we want to study the dependence on $\alpha$ (near $\alpha=0$) of the function
$$g:\alpha \mapsto J_-(\alpha) \left(I_-(\alpha)\right)^{(1-N)/N}.$$
Its derivative is given by
$$g'(\alpha)=\sin^{N-2}(\alpha) I_-(\alpha)^{\frac{1}{N}-2}\left((1-\frac{1}{N})J_-(\alpha)\sin^2\alpha
- I_-(\alpha)\right)$$
that shows that the derivative is negative when $\alpha$ goes to 0 proving the claim.\\
When both $\Gamma_-$ and $\Gamma_+$ are whole spheres, the proof works as well
replacing each sphere by a spherical cap such that $\Gamma_0$ remains of zero ($N-1$)-measure (in other words, we can replace both $\Ga_\pm$ with two ``slightly perturbed" spherical caps in such a way that the two boundaries of the manifolds $\Gamma_\pm$ coincide).
\end{proof}

\begin{lemma}\label{NI=}
The following identity holds true for all $\al$ and $\beta$.
\begin{equation*}
    NI_\pm=(N-1)J_\pm \mp s_\pm^{N-1}c_\pm.
\end{equation*}
\end{lemma}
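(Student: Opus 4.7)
The plan is to prove this by a direct integration by parts, or equivalently, by finding an explicit antiderivative whose derivative recombines the two integrands. The key observation is that
\begin{equation*}
\frac{d}{d\theta}\bigl(\sin^{N-1}\theta\cos\theta\bigr) = (N-1)\sin^{N-2}\theta\cos^2\theta - \sin^N\theta,
\end{equation*}
and using $\cos^2\theta = 1 - \sin^2\theta$ this simplifies to
\begin{equation*}
\frac{d}{d\theta}\bigl(\sin^{N-1}\theta\cos\theta\bigr) = (N-1)\sin^{N-2}\theta - N\sin^N\theta.
\end{equation*}

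For the ``$-$'' identity, I would integrate both sides over $[\alpha,\pi]$: the right-hand side becomes $(N-1)J_-(\alpha) - N I_-(\alpha)$ by the definitions in \eqref{J,I}, while the left-hand side evaluates by the fundamental theorem of calculus to
\begin{equation*}
\bigl[\sin^{N-1}\theta\cos\theta\bigr]_\alpha^\pi = 0 - s_-^{N-1} c_- = -s_-^{N-1} c_-.
\end{equation*}
Rearranging yields $NI_- = (N-1)J_- + s_-^{N-1}c_-$, which matches the ``$-$'' case of the statement (recall that $\mp$ with upper sign ``$-$'' means ``$+$''). Analogously, integrating the same identity over $[0,\beta]$ produces $(N-1)J_+(\beta) - N I_+(\beta)$ on the right and $s_+^{N-1}c_+ - 0 = s_+^{N-1}c_+$ on the left, giving $NI_+ = (N-1)J_+ - s_+^{N-1}c_+$.

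There is no real obstacle here: this is a classical Wallis-type reduction formula for $\int\sin^N\theta\,d\theta$, and the only thing to be careful about is the sign bookkeeping coming from the opposite orientation of the two intervals $[\alpha,\pi]$ and $[0,\beta]$, which is exactly what produces the symbol $\mp$ in the statement.
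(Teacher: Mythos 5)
Your proof is correct and is essentially the paper's argument: the paper integrates $\int\sin^{N}\theta\,d\theta$ by parts to produce the boundary term $s_\pm^{N-1}c_\pm$ and the $\sin^{N-2}\theta\cos^2\theta$ integral, which is exactly the identity you obtain by differentiating $\sin^{N-1}\theta\cos\theta$ and applying the fundamental theorem of calculus. The sign bookkeeping over $[\alpha,\pi]$ versus $[0,\beta]$ is also handled the same way.
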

\begin{proof}
The identity corresponding to the subscript $_-$ follows from integration by parts:
\begin{equation*}
I_-=    \int_\al^\pi (\sin\te)^N\,d\te = (\sin\al)^{N-1}\cos\al + (N-1) \int_\al^\pi (\sin\te)^{N-2}(\cos\te)^2\,d\te. 
\end{equation*}
Now, since $(\cos\te)^2 = 1-(\sin\te)^2$ for all $\te$, we get 
\begin{equation*}
    I_- = s_-^{N-1}c_- + (N-1)J_- -(N-1)I_-,
\end{equation*}
which is exactly what we wanted. The case corresponding to the subscript $+$ is analogous and will be therefore omitted.
\end{proof}

\subsection{Characterization of type~I minimizers}

\begin{theorem}[Snell's law for type~I minimizers]
Let $\Om=\Om(\al,\be,R_\pm)$ be a type~{\rm I} minimizer for \eqref{min pb}. Then the following identity holds true.
\begin{equation*}
    \rho_-\cos\al=\rho_+\cos\be.
\end{equation*}
\end{theorem}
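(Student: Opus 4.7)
The plan is to reduce the admissible type~I configurations (for prescribed $V_\pm$ and $\rho_\pm$) to a one-parameter family and then impose first-order optimality. Since $\cH^{N-1}(\Ga_0)=0$ in type~I, Lemma \ref{some geometric formulas} simplifies the functional to
\begin{equation*}
\cF(\Om) = (N-1)\om\bigl[\rho_- R_-^{N-1} J_-(\al) + \rho_+ R_+^{N-1} J_+(\be)\bigr],
\end{equation*}
and the two volume constraints $\rho_\pm \om R_\pm^N I_\pm = V_\pm$ together with the matching condition $R_-\sin\al = R_+\sin\be$ leave exactly one degree of freedom. I would parameterize that one-dimensional family by the common meeting radius $r := R_-\sin\al = R_+\sin\be$ and derive Snell's law from $d\cF/dr = 0$.

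The computation splits into two independent pieces, one per half-space. For the left half-space, implicit differentiation of $\rho_-\om R_-^N I_-(\al)=V_-$ expresses $R_-'(\al)/R_-$ in terms of $s_-$ and $I_-$; then differentiating $r = R_-\sin\al$ and simplifying the numerator via Lemma \ref{NI=} yields the explicit formula
\begin{equation*}
\frac{d\al}{dr} = \frac{NI_-}{R_-\bigl[(N-1)c_- J_- + s_-^{N-1}\bigr]}.
\end{equation*}
Feeding this into the chain rule for $\rho_- R_-^{N-1} J_-(\al)$ produces, after collecting terms, a factor $(N-1)s_-^2 J_- - NI_-$; a second application of Lemma \ref{NI=} rewrites this as $-c_-\bigl[(N-1)c_- J_- + s_-^{N-1}\bigr]$, which is exactly the bracket sitting in the denominator of $d\al/dr$. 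The massive cancellation that follows gives the strikingly clean identity
\begin{equation*}
\frac{d}{dr}\bigl[\rho_- R_-^{N-1} J_-(\al)\bigr] = -\rho_- r^{N-2}\cos\al,
\end{equation*}
and the analogous computation on the right half-space (with the opposite sign coming from $NI_+ = (N-1)J_+ - s_+^{N-1}c_+$) gives $\frac{d}{dr}\bigl[\rho_+ R_+^{N-1} J_+(\be)\bigr] = +\rho_+ r^{N-2}\cos\be$.

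Summing the two derivatives, first-order optimality at the minimizer yields $(N-1)\om\, r^{N-2}(-\rho_-\cos\al + \rho_+\cos\be) = 0$, which is the claimed identity $\rho_-\cos\al = \rho_+\cos\be$. The main obstacle I anticipate is sign bookkeeping: the $\mp$ in $NI_\pm = (N-1)J_\pm \mp s_\pm^{N-1}c_\pm$ propagates through both the implicit differentiation and the chain rule, and $J_-, I_-$ decrease in $\al$ while $J_+, I_+$ increase in $\be$, inverting several intermediate signs. Once the algebra is organized so that the second use of Lemma \ref{NI=} factors the numerator against the bracket sitting in the denominator of $d\al/dr$ (respectively $d\be/dr$), the expression collapses to a single line and Snell's law drops out of $d\cF/dr=0$.
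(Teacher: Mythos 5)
Your argument is correct — I checked the computations, and both displayed identities do come out as you claim: $\tfrac{d}{dr}\bigl[\rho_-R_-^{N-1}J_-\bigr]=-\rho_-r^{N-2}\cos\al$ and $\tfrac{d}{dr}\bigl[\rho_+R_+^{N-1}J_+\bigr]=+\rho_+r^{N-2}\cos\be$, using $NI_-=(N-1)J_-+s_-^{N-1}c_-$ and $NI_+=(N-1)J_+-s_+^{N-1}c_+$ — but it is a genuinely different route from the paper's. The paper writes $\pa\Om\cap\{x_2>0\}$ as the graph of a function $u$ over the cross-section $U$, observes $\cF(\Om)=2\cG(u)$ for the weighted area functional $\cG$, forms the Lagrangian with the two volume multipliers, and reads off Snell's law from the transmission condition (continuity of the flux $\rho\,\pa_1u/\sqrt{1+|\gr u|^2}$ across $U_0$) for the resulting divergence-form equation with piecewise constant coefficient, evaluated explicitly on the two spherical caps. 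Your proof instead performs a finite-dimensional reduction to the one-parameter family of matched double caps with the volume constraints built in, in the same spirit as the paper's own proof of Theorem \ref{thm snell II} for type~II (where $(\al,\be)$ are two free parameters); what your approach buys is a purely elementary computation avoiding any PDE/regularity input about the interface, while the paper's approach is more conceptual (it identifies Snell's law as a transmission condition and would generalize beyond the explicit cap geometry). Two small points you should make explicit to close the argument: (a) the parametrization by $r=R_-\sin\al=R_+\sin\be$ is legitimate, i.e.\ $\tfrac{dr}{d\al}=\tfrac{R_-}{NI_-}\bigl[(N-1)c_-J_-+s_-^{N-1}\bigr]>0$ and $\tfrac{dr}{d\be}=\tfrac{R_+}{NI_+}\bigl[(N-1)c_+J_+-s_+^{N-1}\bigr]<0$ on $(0,\pi)$ — both brackets vanish at the endpoint $\pi$ (resp.\ $0$) and have derivative $-(N-1)s_\mp J_\mp<0$, so they keep a fixed sign, exactly as in the positivity claim of \eqref{d/d al F(al,be)} — which justifies the inverse/implicit function step; and (b) the minimizer's parameter is interior, i.e.\ $\al,\be\in(0,\pi)$ and $r>0$, which follows from Corollary \ref{corsphere} and the type~I structure, so that minimality over the admissible subfamily indeed forces $d\cF/dr=0$ rather than a one-sided inequality.
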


Let $\Om$ be a type~I minimizer of \eqref{min pb} and let $\Om_\pm$ and $\Ga_\pm$ be defined according to \eqref{some notations}. Let $U=\Om\cap\{x_2=0\}$, $U_\pm= U\cap \rn_\pm$, and $U_0=U\cap \{x_1=0\}$. Now, the set $\pa\Om\cap\{x_2>0\}$ can be expressed as the graph of a function $u:U\to (0,\infty)$. In particular, since we know that $\pa\Om$ is the union of two spherical caps, we get 
\begin{equation}\label{u=}
    u(x) = \begin{cases}
    \sqrt{R_-^2-(x-a e_1)^2} \quad \text{for }x\in U_-,\\
    \sqrt{R_+^2-(x-b e_1)^2} \quad \text{for }x\in U_+,\\
    \end{cases}
\end{equation}
where $e_1=(1,0,\dots, 0)\in \rn$, $a e_1$ and $b e_1$ are the centers of the spheres that generate $\Ga_\pm$ respectively, and $R_\pm$ are the corresponding radii. Notice that, since $\Om$ is of type~I by hypothesis, the function $u$ defined in \eqref{u=} admits a continuous extension along the interface $U_0$. 

Let $\rho=\rho_- \cX_{U_-}+\rho_+ \cX_{U_+}$, where $\cX_A$ denotes the indicator function of the set $A$ (namely, $\cX_A(x)=1$ if $x\in A$ and $\cX_A(x)=0$ otherwise). 
For any function $w\in H_0^1(U)$, let
\begin{equation*}
    \cG(w)=\rho\int_{U} \sqrt{1+|\gr w|^2} \,dx 
    .
\end{equation*}
Notice that, by construction, 
 $   \cF(\Om)=2\cG(u)$.  
In particular, since by definition, $\Om$ is a minimizer of \eqref{min pb}, then $u$ must be a critical point for the following Lagrangian
\begin{equation*}
    \cL(w) = \cG(w)+ \mu_-\int_{U_-} w \, dx + \mu_+\int_{U_+} w \, dx, 
\end{equation*}
where $\mu_\pm$ are the Lagrange multipliers associated to the volume constraints on $\Om_\pm$ respectively.
Therefore, for all $v\in H_0^1(U)$, we must have
\begin{equation*}
\restr{\frac{d}{dt}}{t=0} \cL(u+tv)=0.    
\end{equation*}
An explicit computation of the G\^ateaux derivative above yields 
\begin{equation}\label{weak form}
   \int_U \rho \frac{\gr u \cdot \gr v}{\sqrt{1+|\gr u|^2}}\, dx + \int_U \mu v \, dx =0 \quad \text{for all } v\in H_0^1(U), 
\end{equation}
where we set $\mu=\mu_-\cX_{U_-}+\mu_+ \cX_{U_+}$. 
Equation \eqref{weak form} is nothing else than the weak form of 
\begin{equation*}
    -\dv\left( \rho\frac{\gr u}{\sqrt{1+|\gr u|^2}}\right)=\mu \quad\text{in }U.
\end{equation*}
By a standard result concerning elliptic PDE's in divergence form with piecewise constant coefficients, we get that the quantity $$\rho\frac{\gr u\cdot e_1}{\sqrt{1+|\gr u|^2}}$$ has no jump along the interface $U_0$. 
An explicit computation with \eqref{u=} at hand yields


\begin{equation}\label{no jump}
    \rho_-\, \dfrac{\dfrac{(x-ae_1)\cdot e_1}{\sqrt{R_-^2-(x-a e_1)^2}}}{\sqrt{1+\dfrac{(x-ae_1)^2}{{R_-^2-(x-a e_1)^2}}}}\, =\, 
    \rho_+\, \dfrac{\dfrac{(x-be_1)\cdot e_1}{\sqrt{R_+^2-(x-b e_1)^2}}}{\sqrt{1+\dfrac{(x-be_1)^2}{{R_+^2-(x-b e_1)^2}}}}\quad \text{for }x\in U_0.
\end{equation}
Since, by construction, $\sqrt{R_-^2-(x-a e_1)^2}=
    \sqrt{R_+^2-(x-b e_1)^2} $ for $x\in U_0$, the equality in \eqref{no jump} simplifies to 
    \begin{equation*}
        \dfrac{\rho_- a}{R_-} =     \dfrac{\rho_+ b}{R_+}.
    \end{equation*}
Or, equivalently
\begin{equation}\label{type1 snell}
    \rho_- \cos\al=\rho_+\cos\be.
\end{equation}
\subsection{Characterization of type~II minimizers}
\begin{theorem}[Snell's law for type~II minimizers]\label{thm snell II}
Let $\Om=\Om(\al,\be,R_\pm)$ be a type~{\rm II} minimizer for \eqref{min pb}. If  $R_-\sin\al > R_+\sin\be$, then 
\begin{equation}\label{Omega_ga}
    \rho_-\cos\al= \ga= \rho_+\cos\be.
\end{equation}
On the other hand, if $R_-\sin\al < R_+\sin\be$, then 
\begin{equation}\label{Omega_-ga}
    \rho_-\cos\al= -\ga= \rho_+\cos\be.
\end{equation}
\end{theorem}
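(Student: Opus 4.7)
My approach would be to parametrize the type~II candidates by $(\alpha,\beta)$ alone and use first-order conditions, leveraging the explicit formulas from Lemma \ref{some geometric formulas}. By Theorem \ref{thm type1 or type2} and Corollary \ref{corsphere}, a type~II minimizer $\Omega = \Omega(\alpha,\beta,R_\pm)$ has $\alpha,\beta \in (0,\pi)$, and the type~II condition is precisely $r_- \neq r_+$, where I write $r_\pm := R_\pm s_\pm$. The volume constraints $\rho_\pm \omega R_\pm^N I_\pm = V_\pm$ determine $R_-$ as a smooth function of $\alpha$ alone and $R_+$ as a smooth function of $\beta$ alone, so the minimizer will be an interior critical point of the reduced functional
\[
\mathcal{F}(\alpha,\beta) = (N-1)\omega\left[\rho_- R_-^{N-1} J_- + \rho_+ R_+^{N-1} J_+\right] + \gamma\omega\,\bigl|r_-^{N-1} - r_+^{N-1}\bigr|.
\]
Implicit differentiation of the volume constraint yields $R_-'(\alpha) = R_- s_-^N/(NI_-)$ and $R_+'(\beta) = -R_+ s_+^N/(NI_+)$, which I would substitute throughout.

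Since $r_- \neq r_+$ strictly, the absolute value is smooth near the minimizer, and I would first treat the case $r_- > r_+$, where $|r_-^{N-1} - r_+^{N-1}| = r_-^{N-1} - r_+^{N-1}$, and set $\partial\mathcal{F}/\partial\alpha = 0$. The key step will be to rewrite both $\frac{d}{d\alpha}(R_-^{N-1} J_-)$ and $\frac{d}{d\alpha}(r_-^{N-1})$ so as to expose a common factor. This is where Lemma \ref{NI=} enters crucially: from $NI_- = (N-1)J_- + s_-^{N-1} c_-$ together with $s_-^2 + c_-^2 = 1$, both derivatives collapse onto the single bracket $(N-1)J_- c_- + s_-^{N-1}$, multiplied by $-c_-$ in the first expression and by $(N-1)$ in the second. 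Consequently $\partial\mathcal{F}/\partial\alpha$ factors as $K(\alpha)\cdot(\gamma - \rho_- c_-)$ for a common non-degenerate prefactor $K$, and setting this to zero forces $\rho_-\cos\alpha = \gamma$. An entirely analogous calculation in $\beta$, using the companion identity $NI_+ = (N-1)J_+ - s_+^{N-1} c_+$, will give $\rho_+\cos\beta = \gamma$, proving \eqref{Omega_ga}.

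In the opposite case $r_- < r_+$ the absolute value becomes $r_+^{N-1} - r_-^{N-1}$, so the sign of $\gamma$ flips in both first-order conditions, yielding $\rho_-\cos\alpha = -\gamma = \rho_+\cos\beta$, i.e., \eqref{Omega_-ga}. The main obstacle will be the algebraic simplification step: without the precise substitution afforded by Lemma \ref{NI=}, the two derivative expressions would remain an opaque mixture of $I_\pm, J_\pm, s_\pm, c_\pm$ with no visible common factor. A secondary technical point I would need to verify is that the prefactor $K$ does not vanish at the optimal angle; in fact $K$ is proportional to $dr_-/d\alpha$, so its vanishing would mean the interface radius is stationary in $\alpha$, a degenerate situation that can be ruled out for an actual minimizer (or, if necessary, handled by a limiting argument).
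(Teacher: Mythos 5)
Your proposal is correct and follows essentially the same route as the paper: reduce to the two angles via the volume constraints, differentiate the functional \eqref{F(al,be)} with the derivatives \eqref{R'}, and use Lemma \ref{NI=} to factor each partial derivative as a positive prefactor times $(\gamma-\rho_\mp\cos(\cdot))$, with the sign of $\gamma$ flipping when the absolute value opens the other way. Your worry about the prefactor is easily settled: the bracket $(N-1)J_-c_-+s_-^{N-1}$ equals $NI_-c_-+s_-^{N+1}$, which is exactly the quantity $f_1$ shown to be strictly positive on $[0,\pi)$ in the proof of Lemma \ref{L1 L2}.
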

\begin{proof}
For $\al\in[0,\pi)$ and $\be\in(0,\pi]$, let us define the following 
\begin{equation}\label{R_- and R_+}
   R_-:= R_-(\al)=\sqrt[\leftroot{-3}\uproot{5}N]{\frac{V_-}{\rho_- \om I_-(\al)}}, \quad
R_+:=R_+(\be)=\sqrt[\leftroot{-3}\uproot{5}N]{\frac{V_+}{\rho_+ \om I_+(\be)}}.
\end{equation}
This way, by the first formula of Lemma \ref{some geometric formulas}  we are sure that the volume constraints 
\begin{equation}\label{volume constraint}
\rho_\pm \left|\Om_\pm (\al, \be, R_-,R_+)\right|=V_\pm
\end{equation}
are satisfied. 
Notice that, differentiating \eqref{R_- and R_+} yields
\begin{equation}\label{R'}
\frac{d}{d\al} R_-(\al) = \frac{s_-^N R_-}{NI_-}, \quad 
\frac{d}{d\be} R_+(\be) = -\frac{s_+^N R_+}{NI_+}. 
\end{equation}

Let now $\Om(\al,\be)=\Om(\al,\be, R_-(\al), R_+(\be))$ be a type~II minimizer of \eqref{min pb}.
In particular, the pair $(\al,\be)$ is a minimizer of the functional $\cF(\al,\be)=\cF\left(\Om(\al,\be) \right)$: 
\begin{equation}\label{F(al,be)}
\begin{aligned}
    \cF(\al,\be)= \rho_-\cH^{N-1}(\Ga_-(\al))+ \rho_+\cH^{N-1}(\Ga_+(\be))+ \ga\om \left| R_-^{N-1}(\al)s_-^{N-1} -   R_+^{N-1}(\be)s_+^{N-1}\right|\\
    = 
    (N-1)\rho_- \om R_-^{N-1}J_-+ (N-1)\rho_+ \om R_+^{N-1}J_+ + \ga \om \left| R_-^{N-1}s_-^{N-1} - R_+^{N-1}s_+^{N-1}   \right|.
\end{aligned}
\end{equation}
Moreover, since $(\al,\be)$ is a minimizer of $\cF(\cdot,\cdot)$ by hypothesis, we have
\begin{equation*}
    \frac{d}{d\al}\cF(\al,\be)= 0 = \frac{d}{d\be}\cF(\al,\be). 
\end{equation*}
In what follows, we will compute the partial derivative of $\cF$ with respect to the first variable, $\al$, at  $(\al,\be)$. 
Differentiating \eqref{F(al,be)} with respect to $\al$ with \eqref{R'} at hand, under the assumption that $R_-(\al)\sin\al >R_+(\be)\sin\be$ yields
\begin{equation*}
    0=\frac{d}{d\al}\cF(\al,\be)= (N-1)\om R_-^{N-1} \left( \rho_- (N-1) \frac{s_-^N J_-}{NI_-} - \rho_-s_-^{N-2}+ \ga  \frac{s_-^{2N-1}}{NI_-} + \ga s_-^{N-2}c_- \right).
\end{equation*}
Now, by Lemma \eqref{NI=}, we get
\begin{equation}\label{d/d al F(al,be)}
  0=\frac{d}{d\al}\cF(\al,\be)= \underbrace{\frac{(N-1)\om R_-^{N-1} s_-^{N-2}}{NI_-} \left( (N-1) J_- c_- + s_-^{N-1}\right)}_{>0} (\ga-\rho_-c_-).
\end{equation}
This implies that 
\begin{equation*}
\rho_-\cos\al=\ga    
\end{equation*}
as wanted. The condition concerning the angle $\be$ is analogous. 
Finally, the optimality condition \eqref{Omega_-ga} follows from \eqref{Omega_ga} by replacing $\ga$ by $-\ga$ in \eqref{F(al,be)}. 
\end{proof}

\begin{lemma}\label{L1 L2}
Let 
\begin{equation*}
    L_1(\al)=\frac{\sin^N(\al)}{I_-(\al)} \quad \text{and}\quad L_2(\be)= \frac{\sin^N(\be)}{I_+(\be)}.
\end{equation*}
Then $L_1$ is a strictly increasing function in the interval $(0,\pi)$, while $L_2$ is strictly decreasing in the same interval.
\end{lemma}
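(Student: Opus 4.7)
The natural approach is to differentiate $L_1$ and $L_2$ and check signs. The naive derivative
\[
L_1'(\al) = \frac{N\sin^{N-1}\al\cos\al\,I_-(\al) + \sin^{2N}\al}{I_-^2(\al)}
\]
has indeterminate sign on $(\pi/2,\pi)$ where $\cos\al<0$, and the analogous expression for $L_2'$ is indeterminate on $(0,\pi/2)$. The trick is to introduce an auxiliary function that isolates this delicate part and whose monotonicity is accessible via Lemma \ref{NI=}.

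For $L_1$ I would set
\[
P(\al) := N\cos\al\,I_-(\al)+\sin^{N+1}\al,
\]
so that $L_1'(\al) = \sin^{N-1}\al\,P(\al)/I_-^2(\al)$, reducing strict monotonicity of $L_1$ to strict positivity of $P$ on $(0,\pi)$. Since $P(\pi)=0$, it suffices to show $P'<0$ on $(0,\pi)$. A direct computation using $I_-'(\al)=-\sin^N\al$ yields $P'(\al)=\sin\al\bigl(\sin^{N-1}\al\cos\al-NI_-(\al)\bigr)$, and here the identity of Lemma \ref{NI=} collapses the bracket to $-(N-1)J_-(\al)$, giving
\[
P'(\al)=-(N-1)\sin\al\,J_-(\al)<0\quad\text{for }\al\in(0,\pi).
\]
Integrating back from $\pi$ yields $P(\al)>0$ on $(0,\pi)$, whence $L_1$ is strictly increasing.

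For $L_2$ I would mirror the construction with
\[
Q(\be):=N\cos\be\,I_+(\be)-\sin^{N+1}\be,
\]
which satisfies $Q(0)=0$ and $L_2'(\be)=\sin^{N-1}\be\,Q(\be)/I_+^2(\be)$. The $+$ branch of Lemma \ref{NI=} produces the parallel identity $Q'(\be)=-(N-1)\sin\be\,J_+(\be)<0$, whence $Q<0$ on $(0,\pi)$ and $L_2$ is strictly decreasing. Alternatively one may reduce $L_2$ to $L_1$ via the reflection $\theta\mapsto\pi-\theta$, which swaps $J_+\leftrightarrow J_-$ and $I_+\leftrightarrow I_-$, but the direct route is equally short.

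The only real obstacle is guessing the auxiliary function $P$ (equivalently, recognizing the right factoring of the numerator of $L_1'$); once it is written down, Lemma \ref{NI=} makes the sign of $P'$ transparent and the rest is bookkeeping. Without such a factoring, a direct sign check on $(\pi/2,\pi)$ would require bounding $(N-1)|\cos\al|J_-(\al)$ against $\sin^{N-1}\al$ by hand, which is considerably more delicate.
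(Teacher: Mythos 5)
Your proof is correct and follows essentially the same route as the paper's: both reduce the claim to positivity of the auxiliary function $P(\al)=N\cos\al\,I_-(\al)+\sin^{N+1}\al$ (the paper's $f_1$) via $P(\pi)=0$ together with $P'<0$ on $(0,\pi)$, and handle $L_2$ symmetrically. The only difference is that you invoke Lemma~\ref{NI=} to see at once that $P'(\al)=-(N-1)\sin\al\,J_-(\al)<0$, whereas the paper establishes the sign of $P'$ by one further differentiation from the endpoint $\pi$; your shortcut is valid and slightly cleaner, but it is the same argument in substance.
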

\begin{proof}
We will just show that the function $L_1$ is strictly increasing, since the proof for $L_2$ is analogous. 
First of all, we compute the derivative of $L_1$ with respect to $\al$:
\begin{equation*}
    \frac{d}{d\al} L_1(\al)= {\frac{s_-^{N-1}}{I_-^2} }\left(  N c_- I_- + s_-^{N+1}\right).
\end{equation*}
We will show that $ f_1=N c_- I_- + s_-^{N+1}$ is strictly positive for all $\al\in[0,\pi)$. In particular, notice that $f_1(\pi)=0$, hence it suffices to show hat  $f_1$ is strictly decreasing in the interval $(0,\pi)$. 
Another derivative with respect to $\al$ yields
\begin{equation*}
\frac{d}{d\al}f_1(\al)= s_- \left( -N I_- + s_-^{N-1} c_- \right)    
\end{equation*}
We claim that $f_2= -N I_- + s_-^{N-1} c_-$ is negative in the interval $(0,\pi)$.   
To this end, notice that $f_2(\pi)=0$, hence we just need to show that $f_2$ is a strictly increasing function in the interval $(0,\pi)$. 
This is indeed true, as
\begin{equation*}
    \frac{d}{d\al} f_2= (N-1)s_-^{N}+(N-1)s_-^{N-2}c_-^2= (N-1)s_-^N>0.
\end{equation*}
Therefore $L_1$ is a strictly increasing function as claimed. 
\end{proof}

\begin{lemma}\label{three candidates}
Let $\rho_\pm>0$, $0\le\ga<\min\{\rho_-,\rho_+\}$ and $V_\pm>0$ be given. Then the following three candidate minimizers $\Om_\ga$, $\Om_{-\ga}$ and $\Om^\star$ are well defined (and uniquely characterized by their defining properties).
\begin{itemize}
    \item[\rm (i) ] The set $\Om_\ga$ is the unique candidate minimizer of type~{\rm II} of the form $\Om(\al,\be,R_\pm)$ that satisfies both the volume constraints \eqref{volume constraint} and the Snell's law \eqref{Omega_ga}. 
    \item[\rm (ii) ] The set $\Om_{-\ga}$ is the unique candidate minimizer of type~{\rm II} of the form $\Om(\al,\be,R_\pm)$ that satisfies both the volume constraints \eqref{volume constraint} and the Snell's law \eqref{Omega_-ga}.
    \item[\rm (iii) ] The set $\Om^\star$ is the unique candidate minimizer of type~{\rm I} of the form $\Om(\al,\be, R_\pm)$ that satisfies the volume constraints \eqref{volume constraint}, the Snell's law \eqref{type1 snell} and the equality $R_-\sin\al=R_+\sin\be$. 
\end{itemize}
\end{lemma}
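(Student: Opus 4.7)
The plan is to reduce each item to an easily solvable system by exploiting the explicit parametrizations available to us.

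\textbf{For items (i) and (ii)}, the Snell laws determine the angles explicitly: under \eqref{Omega_ga}, the hypothesis $0 \le \gamma < \min\{\rho_-,\rho_+\}$ ensures that $\alpha=\arccos(\gamma/\rho_-) \in (0,\pi/2]$ and $\beta=\arccos(\gamma/\rho_+) \in (0,\pi/2]$ are uniquely defined; the analogous statement for \eqref{Omega_-ga} places $\alpha,\beta$ in $[\pi/2,\pi)$. Given the angles, the volume constraints \eqref{volume constraint} then fix $R_\pm$ uniquely via formula \eqref{R_- and R_+}. This yields both existence and uniqueness of $\Om_\gamma$ and $\Om_{-\gamma}$.

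\textbf{For item (iii)}, the Snell's law \eqref{type1 snell} is a single equation in two unknowns $\alpha,\beta$, so I would introduce the auxiliary parameter $t:=\rho_-\cos\alpha=\rho_+\cos\beta$. By Corollary \ref{corsphere} together with the positivity of $V_\pm$, the endpoints $t=\pm\min\{\rho_-,\rho_+\}$ are excluded (they would force one of $\Ga_\pm$ to be a full sphere, or one of $\Om_\pm$ to be empty), so $t$ must range in the open interval $(-\min\{\rho_-,\rho_+\},\min\{\rho_-,\rho_+\})$. For each such $t$, set $\alpha(t):=\arccos(t/\rho_-)$ and $\beta(t):=\arccos(t/\rho_+)$, and let $R_\pm(t)$ be given by \eqref{R_- and R_+}, so that the volume constraints hold automatically. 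The remaining interface-matching condition $R_-\sin\alpha=R_+\sin\beta$ is equivalent (after taking $N$-th powers and using \eqref{R_- and R_+}) to
$$\Phi(t):=\frac{V_-}{\rho_-}L_1(\alpha(t))-\frac{V_+}{\rho_+}L_2(\beta(t))=0,$$
with $L_1,L_2$ as in Lemma \ref{L1 L2}.

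It remains to show that $\Phi$ has a unique zero in its domain. Since $\alpha(t)$ and $\beta(t)$ are both strictly decreasing in $t$, Lemma \ref{L1 L2} implies that $L_1(\alpha(t))$ is strictly decreasing and $L_2(\beta(t))$ is strictly increasing in $t$; hence $\Phi$ is continuous and strictly decreasing. A short asymptotic analysis at the two endpoints (using $\sin(\pi-\epsilon)\sim\epsilon$ and $\int_0^\epsilon\sin^N\theta\,d\theta\sim\epsilon^{N+1}/(N+1)$) shows that $\Phi>0$ near the left endpoint and $\Phi<0$ near the right one, regardless of the ordering of $\rho_-$ and $\rho_+$. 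The intermediate value theorem, combined with strict monotonicity, then produces a unique $t^\star$ with $\Phi(t^\star)=0$, and $\Om^\star$ is defined by the corresponding quadruple. The only mildly delicate point is the boundary analysis of $\Phi$, which has to be split into the subcases $\rho_-<\rho_+$, $\rho_->\rho_+$, $\rho_-=\rho_+$ depending on which of the two $L$-factors blows up or vanishes at each endpoint.
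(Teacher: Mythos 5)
Your proposal is correct and follows essentially the same route as the paper: the Snell's laws decouple the angles from the radii in (i)--(ii), and for (iii) the matching condition is reduced to a one-parameter equation whose left-hand side is strictly monotone by Lemma \ref{L1 L2}, so uniqueness follows and existence comes from the intermediate value theorem. The only difference is that you carry out the endpoint asymptotics explicitly (and correctly), whereas the paper merely mentions this limit-case analysis as one of two possible ways to get existence, the other being the existence of a minimizer when $\ga>\min\{\rho_-,\rho_+\}$.
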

\begin{proof}
The points (i)-(ii) can be treated together, by considering the set $\Om_\ga$ that satisfies conditions \eqref{volume constraint}--\eqref{Omega_ga} for $|\ga|<\min(\rho_-,\rho_+)$. Notice that this amounts to solving a nonlinear system of 4 equations in 4 variables, which nicely decouples. We get
\begin{equation}\label{decoupled solution}
    \begin{aligned}
\al&=\arccos\left(\frac{\ga}{\rho_-}\right), \quad &\be=\arccos\left(\frac{\ga}{\rho_+}\right),\\
   R_-&=\sqrt[\leftroot{-3}\uproot{5}N]{\frac{V_-}{\rho_- \om I_-(\al)}}, \quad
&R_+=\sqrt[\leftroot{-3}\uproot{5}N]{\frac{V_+}{\rho_+ \om I_+(\be)}}.
    \end{aligned}
\end{equation}

A key observation to show  (iii) relies on the fact that any set $\Om^\star$ satisfying \eqref{volume constraint}--\eqref{type1 snell} is indeed a particular case of $\Om_\ga$ satisfying \eqref{volume constraint}--\eqref{Omega_ga} for some constant $\ga\in\RR$ to be determined. We just need to determine the value of $\ga$ such that $(\al,\be, R_\pm)$, defined by \eqref{decoupled solution}, satisfy $R_-\sin\al=R_+\sin\be$. To this end we will look for the zeros of the following function
\begin{equation*}
L(\ga)=R_-^N \sin^N(\al) - R_+^N\sin^N(\be) = \frac{V_-}{\om\rho_-} L_1(\al) - \frac{V_+}{\om \rho_+}L_2(\be),
\end{equation*}
    where $\al$ and $\be$ are considered to be functions of $\ga$ by the first two relations in \eqref{decoupled solution} and the functions $L_1$, $L_2$ are defined in Lemma \eqref{L1 L2}. Now, since $\al$ and $\be$ are strictly decreasing functions of $\ga$, Lemma \eqref{L1 L2} implies that $\ga\mapsto L(\ga)$ is strictly decreasing too. This ensures uniqueness for $\Om^\star$. As far as existence is concerned, one could analyze the limit cases and conclude by the intermediate value theorem or simply notice that the existence of a type~I candidate minimizer derives from the existence of a minimizer of \eqref{min pb} for $\ga>\min(\rho_-,\rho_+)$ (i.e. when no candidate minimizers of type~II can be defined).   
\end{proof}

In what follows, let $(\al^\star,\be^\star)\in (0,\pi)^2$ denote the unique pair such that
\begin{equation*}
    \Om^\star=\Om(\al^\star,\be^\star).
\end{equation*}
Furthermore, let 
\begin{equation}\label{ga^*}
    \ga^\star=\rho_-\cos\al^\star \; (=\rho_+\cos\be^\star).
\end{equation}
Notice that the sign of $\ga^\star$ is determined by the given constants $V_\pm$ and $\rho_\pm$. Indeed if $V_-/\rho_- \gtreqless V_+/\rho_+$, then $\ga^\star\gtreqless 0$. This is a consequence of the fact that 
\begin{equation}\label{L(0)}
    L(0)= \frac{1}{\om \int_{\pi/2}^\pi \sin^N(\te)\,d\te} \left( \frac{V_-}{\rho_-}-\frac{V_+}{\rho_+}\right)  
\end{equation}
and the map $\ga\mapsto L(\ga)$ is strictly decreasing, as stated in the proof of point (iii) of Lemma \eqref{three candidates}.  


\section{Proof of the main result} \label{ch proof main thm}

We are ready to prove the main result of this paper.
\begin{theorem} 
Let $\ga\ge0$, $\rho_\pm>0$ and $V_\pm>0$ be given. Moreover, without loss of generality let $V_-/\rho_- > V_+/\rho_+$. Then, the minimizers of \eqref{min pb} can be characterized as follows.
\begin{itemize}
\item[\rm (i) ] If $\ga<\ga^\star$, $\Om_\ga$ is the only minimizer of \eqref{min pb} up to suitable translations. 
\item[\rm (ii) ]  If $\ga=\ga^\star$, then $\Om_{\ga^\star}=\Om^\star$ is the only minimizer of \eqref{min pb} up to suitable translations.  
\item[\rm (iii) ]  If $\ga>\ga^\star$, then $\Om^\star$ is the only minimizer of \eqref{min pb} up to suitable translations.
\end{itemize}
\end{theorem}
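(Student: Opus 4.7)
The plan is to combine the existence result (Theorem \ref{thm type1 or type2}) with the classification of candidate critical points in Lemma \ref{three candidates}. By those results any minimizer must be (a translate of) one of $\Om_\ga$, $\Om_{-\ga}$, or $\Om^\star$, so the task reduces to deciding which of these is realized as $\ga$ varies. Under the working hypothesis $V_-/\rho_- > V_+/\rho_+$, equation \eqref{L(0)} gives $L(0)>0$, and the strict monotonicity of $L$ established in the proof of Lemma \ref{three candidates}(iii) then yields $L(-\ga)\ge L(0)>0$ for every $\ga\ge 0$. But $\Om_{-\ga}$ is a genuine type~II competitor only when $L(-\ga)<0$, so this candidate is excluded in all three cases. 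In the same vein, $\Om_\ga$ is a genuine type~II competitor only when $L(\ga)>0$, which, by the monotonicity of $L$ and $L(\ga^\star)=0$, is equivalent to $\ga<\ga^\star$.

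Cases (ii) and (iii) then follow at once. When $\ga=\ga^\star$ the identity $L(\ga^\star)=0$ forces $R_-\sin\al=R_+\sin\be$ in $\Om_{\ga^\star}$, so $\Om_{\ga^\star}$ degenerates to the unique type~I competitor $\Om^\star$ and only one candidate remains. When $\ga>\ga^\star$ one has $L(\ga)<0$ (or $\Om_\ga$ is not even defined, if $\ga\ge\min\{\rho_-,\rho_+\}$), which together with the previous step leaves $\Om^\star$ as the sole surviving candidate and, by existence, the unique minimizer.

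The key case is (i), where both $\Om_\ga$ and $\Om^\star$ are legitimate competitors. I propose to compare $\cF(\Om_\ga)$ with $\cF(\Om^\star)$ via an envelope argument. Write $\cF_t$ for the functional of \eqref{min pb} with the interface weight $\ga$ replaced by $t$, and define
\begin{equation*}
h(t) := \cF_t(\Om_t), \qquad t \in [0,\ga^\star].
\end{equation*}
As shown in the proof of Theorem \ref{thm snell II}, the Snell's law \eqref{Omega_ga} is exactly the joint vanishing of $\partial\cF_t/\partial\al$ and $\partial\cF_t/\partial\be$ at $(\al_t,\be_t)$ along the volume-constrained $(\al,\be)$-family, so $\Om_t$ is a critical point of $\cF_t$. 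Since the $t$-dependence of $\cF_t$ sits only in the interface term, the envelope theorem yields
\begin{equation*}
h'(t) \;=\; \left.\frac{\partial\cF_t}{\partial t}\right|_{\Om=\Om_t} \;=\; \cH^{N-1}(\Ga_0^{(t)}) \;=\; \om\bigl[R_-^{N-1}s_-^{N-1}-R_+^{N-1}s_+^{N-1}\bigr]_{(t)} \;>\; 0
\end{equation*}
for every $t\in[0,\ga^\star)$, since $L(t)>0$ there. Two observations close the argument: first, $\cF_\ga(\Om_\ga)=h(\ga)$ by definition of $h$; second, $\cF_\ga(\Om^\star)=\cF_{\ga^\star}(\Om^\star)=h(\ga^\star)$, the first equality because $\cH^{N-1}(\Ga_0^\star)=0$ removes the $\ga$-dependence of $\cF_\bullet(\Om^\star)$, and the second because $\Om_{\ga^\star}=\Om^\star$ by case (ii). Strict monotonicity of $h$ then gives $\cF_\ga(\Om_\ga)=h(\ga)<h(\ga^\star)=\cF_\ga(\Om^\star)$, so $\Om_\ga$ is the unique minimizer up to translations. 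The main subtlety lies in justifying the envelope step rigorously, but this amounts only to the first-order computation already carried out in the proof of Theorem \ref{thm snell II}.
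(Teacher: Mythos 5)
Your reduction to the candidates $\Om_\ga$, $\Om_{-\ga}$, $\Om^\star$ and your handling of cases (ii) and (iii) follow the paper's route (exclude $\Om_{-\ga}$ via $L(-\ga)\ge L(0)>0$ and Theorem \ref{thm snell II}; exclude $\Om_\ga$ for $\ga>\ga^\star$ via $L(\ga)<0$), but your treatment of the key case (i) is genuinely different and, as far as I can check, correct. The paper argues by contradiction: if $\Om^\star$ were a minimizer, then $\al\mapsto\cF(\Om(\al,\be^\star))$ would have a minimum at $\al^\star$, yet the explicit formula \eqref{d/d al F(al,be)} (with the sign of the term inside the absolute value controlled by Lemma \ref{L1 L2}) shows both one-sided derivatives at the kink $\al=\al^\star$ equal $A(\al^\star)(\pm\ga-\ga^\star)<0$ when $\ga<\ga^\star$, so $\Om^\star$ fails first-order optimality and $\Om_\ga$ wins by elimination. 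You instead compare values directly through the value function $h(t)=\cF_t(\Om_t)$: criticality of $(\al_t,\be_t)$ for $\cF_t$ (which is exactly the vanishing of \eqref{d/d al F(al,be)} with $\ga$ replaced by $t$, legitimate on $[0,\ga^\star)$ where $L(t)>0$ so the absolute value can be dropped) gives $h'(t)=\cH^{N-1}(\Ga_0^{(t)})>0$, and since $\cF_\ga(\Om^\star)=\cF_{\ga^\star}(\Om^\star)=h(\ga^\star)$ you get the strict inequality $\cF_\ga(\Om_\ga)=h(\ga)<h(\ga^\star)=\cF_\ga(\Om^\star)$, in fact the quantitative gap $\cF_\ga(\Om^\star)-\cF_\ga(\Om_\ga)=\int_\ga^{\ga^\star}\cH^{N-1}(\Ga_0^{(t)})\,dt$. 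Your approach buys a global, quantitative comparison of the two surviving candidates (and shows in passing that the optimal value is monotone in $\ga$), at the cost of having to set up the one-parameter family $t\mapsto\Om_t$ and justify the envelope step, which is indeed only the chain rule applied to the explicit smooth formulas \eqref{F(al,be)} and \eqref{decoupled solution}; the paper's argument is more local (it only shows $\Om^\star$ is not even a constrained critical point in the relevant sense) but avoids any family in $t$. Two cosmetic points: at $\ga=0$ the candidate $\Om_{-\ga}$ is not ``excluded'' but coincides with $\Om_\ga$, and your phrase ``genuine type~II competitor only when $L(\pm\ga)>0$ (resp.\ $<0$)'' should be read as ``compatible with the necessary condition of Theorem \ref{thm snell II}, given $\ga>0$''; both are exactly how the paper phrases these steps, so nothing is missing.
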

\begin{proof}
By Lemma \ref{three candidates} we know that (up to suitable translations) there are only three candidates minimizers for \eqref{min pb}, namely $\Om_\ga$, $\Om_{-\ga}$ and $\Om^\star$. First of all, we will show that the set of candidate minimizers can be reduced to just $\Om_{\ga}$ and $\Om^\star$. Indeed, if $\ga=0$, then $\Om_\ga=\Om_{-\ga}$. The case $\ga>0$ is a bit more complicated. First, notice that $L(0)>0$ by \eqref{L(0)}. Moreover, since the map $\ga\mapsto L(\ga)$ is strictly decreasing, then $L(-\ga)>0$: this implies that $R_-\sin\al>R_+\sin\be$ and hence, by Theorem \ref{thm snell II}, $\Om_{-\ga}$ should satisfy \eqref{Omega_ga} (instead of \eqref{Omega_-ga}) in order to be a minimizer. 

We will now prove part (i) of the theorem by contradiction. Let $\ga<\ga^\star$ and assume by contradiction that the candidate of type~I, $\Om^\star=\Om(\al^\star,\be^\star)$, is a minimizer. In particular, this implies that $\al^\star$ is a minimum point of the functional $f(\al)=\cF\left(\Om(\al,\be^\star)\right)$ which was previously explicitly computed in \eqref{F(al,be)}. Since $\Om(\al^\star,\be^\star)$ is of type~I by construction, the term inside the absolute value bars in \eqref{F(al,be)} vanishes at $(\al^\star,\be^\star)$. In particular, by the first part of Lemma \ref{L1 L2}, we know that the term inside the absolute value bars in \eqref{F(al,be)} is negative for $\be=\be^\star$ and $\al<\al^\star$, and positive for $\be=\be^\star$ and $\al>\al^\star$. Therefore, the same computations that lead to \eqref{d/d al F(al,be)} give us  
\begin{equation*}
\frac{d}{d\al}    f(\al) = \begin{cases}
    A(\al) (-\ga-\rho_-\cos\al)\quad \text{for }0<\al<\al^\star,\\
    A(\al) (\ga-\rho_-\cos\al)\quad \text{for }\al^\star<\al<\pi,
        \end{cases}
\end{equation*}
where 
\begin{equation*}
A(\al)=\frac{(N-1)\om R_-^{N-1} s_-^{N-2}}{NI_-} \left( (N-1) J_- c_- + s_-^{N-1}\right)>0.   
\end{equation*}
In particular, since 
\begin{equation*}
    \ga<\ga^\star=\rho_-\cos\al^\star
\end{equation*}
by assumption, both left and right derivatives of $f$ at $\al^\star$ are negative, which violates the assumption made about $\al^\star$ being a minimum point of $f$.

Part  (ii) is obvious because, by definition, the characteristic property of $\ga^\star$ is 
\begin{equation*}
    \Om_{\ga^\star}=\Om^\star.
\end{equation*}
Since we previously ruled out $\Om_{-\ga}$ as a competitor, we obtain that $\Om_{\ga^\star}=\Om^\star$ is the unique minimizer up to suitable translations.

We will now take $\ga>\ga^\star$ and prove part  (iii) of the theorem. It will suffice to show that $\Om_\ga$ is not a minimizer. Indeed, since $L(\ga^\star)=0$ by construction, and $\ga\mapsto L(\ga)$ is strictly decreasing by Lemma \ref{L1 L2}, we get that $L(\ga)<0$. In other words, $\Om_\ga$ satisfies $R_-\sin\al<R_+\sin\be$. If $\Om_\ga$ were indeed a minimizer, then by Theorem \ref{thm snell II} it should satisfy the Snell's law \eqref{Omega_-ga} (instead of \eqref{Omega_ga}). Since $\ga$ is not $0$ in this case, this is a contradiction.  
\end{proof}

\begin{small}

\end{small}
\end{document}